\newcommand{\ra}[1]{\renewcommand{\arraystretch}{#1}}
\titleformat{\chapter}[display]
{\normalfont\huge\bfseries}{\chaptertitlename\\thechapter}{20pt}{\Huge}
\titleformat{\subsubsection}[runin]
{\normalfont\normalsize\bfseries}{\thesubsubsection}{1em}{}
\titleformat{\paragraph}[runin]
{\normalfont\normalsize\bfseries}{\theparagraph}{1em}{}
\titleformat{\subparagraph}[runin]
{\normalfont\normalsize\bfseries}{\thesubparagraph}{1em}{}
\titlespacing*{\chapter} {0pt}{50pt}{40pt}
\titlespacing*{\section} {0pt}{3.5ex plus 1ex minus .2ex}{2.3ex plus .2ex}
\titlespacing*{\subsection} {0pt}{3.25ex plus 1ex minus .2ex}{1.5ex plus .2ex}
\titlespacing*{\subsubsection}{0pt}{3.25ex plus 1ex minus .2ex}{1.5ex plus .2ex}
\titlespacing*{\paragraph} {0pt}{3.25ex plus 1ex minus .2ex}{1em}
\titlespacing*{\subparagraph} {\parindent}{3.25ex plus 1ex minus .2ex}{1em}
\subjclass[2000]{Primary 16S35; Secondary 16W30}
\newtheorem{theorem}{Theorem}[section]
\newtheorem{lemma}[theorem]{Lemma}
\newtheorem{proposition}[theorem]{Proposition}
\theoremstyle{definition}
\newtheorem{definition}[theorem]{Definition}
\newtheorem{notation}[theorem]{Notation}
\theoremstyle{remark}
\newtheorem{remark}[theorem]{Remark}
\DeclareMathOperator{\Aut}{Aut}
\DeclareMathOperator{\gap}{gap}
\DeclareMathOperator{\Supp}{Supp}
\DeclareMathOperator{\en}{en}
\DeclareMathOperator{\BezoutCoefficients}{BezoutCoefficients}
\DeclareMathOperator{\NumberOfFactors}{NumberOfFactors}
\DeclareMathOperator{\Ss}{S}
\DeclareMathOperator{\st}{st}
\DeclareMathOperator{\Pred}{Pred}
\DeclareMathOperator{\lcm}{lcm}
\DeclareMathOperator{\Dir}{Dir}
\DeclareMathOperator{\dir}{dir}
\DeclareMathOperator{\Simple}{Simple}
\DeclareMathOperator{\length}{length}
\newcommand{\ov}{\overline}
\DeclareMathOperator{\lo}{lo}
\DeclareMathOperator{\hi}{hi}
\DeclareMathOperator{\GetChildrenAndFinalList}{GetChildrenAndFinalList}
\DeclareMathOperator{\GetStartingEdges}{GetStartingEdges}
\DeclareMathOperator{\mnFamilies}{mnFamilies}
\DeclareMathOperator{\GetGeneratedCorners}{GetGeneratedCorners}
\DeclareMathOperator{\GeneratedCorners}{GeneratedCorners}
\DeclareMathOperator{\GetCompleteChains}{GetCompleteChains}
\DeclareMathOperator{\AdmissibleCompleteChains}{AdmissibleCompleteChains}
\DeclareMathOperator{\StartingEdges}{StartingEdges}
\DeclareMathOperator{\FinalList}{FinalList}
\DeclareMathOperator{\GetCornerChildrenList}{GetCornerChildrenList}
\DeclareMathOperator{\CornerChildrenList}{CornerChildrenList}
\DeclareMathOperator{\ChildrenList}{ChildrenList}
\DeclareMathOperator{\GetPossibleLastLowerCorners}{GetPossibleLastLowerCorners}
\DeclareMathOperator{\GetIsAdmissible}{GetIsAdmissible}
\DeclareMathOperator{\PFL}{PFL}
\DeclareMathOperator{\PLLC}{PLLC}
\DeclareMathOperator{\Lmax}{Lmax}
\DeclareMathOperator{\gmax}{gmax}
\DeclareMathOperator{\enF}{enF}
\DeclareMathOperator{\CompleteChains}{CompleteChains}
\DeclareMathOperator{\OpenChains}{OpenChains}
\DeclareMathOperator{\POpenChains}{POpenChains}
\DeclareMathOperator{\MN}{MN}
\DeclareMathOperator{\TRUE}{TRUE}
\DeclareMathOperator{\FALSE}{FALSE}
\DeclareMathOperator{\Last}{Last}
\newcommand{\wrs}{\hspace{-0.9pt}\wr\hspace{-0.9pt}}
\DeclareMathOperator{\Primefactors}{Prime factors}
\DeclareMathOperator{\IsAdmissible}{IsAdmissible}
\begin{document}

\title{Some algorithms related to the Jacobian Conjecture}

\author[Jorge A. Guccione]{Jorge A. Guccione$^{1,}$$^2$}
\address{$^1$ Universidad de Buenos Aires. Facultad de Ciencias Exactas y Naturales. Departamento de Matem\'atica. Buenos Aires. Argentina}
\address{$^2$ CONICET-Universidad de Buenos Aires. Instituto de Investigaciones Matem\'aticas ``Luis A. santal\'o'' (IMAS). Buenos Aires. Argentina}
\email{vander@dm.uba.ar}

\author[Juan J. Guccione]{Juan J. Guccione$^{1,}$$^3$}
\address{$^3$ CONICET. Instituto Argentino de Matem\'atica (IAM). Buenos Aires. Argentina}
\email{jjgucci@dm.uba.ar}

\thanks{Jorge A. Guccione and Juan J. Guccione were supported by UBACyT 20020150100153BA (UBA) and PIP 11220110100800CO (CONICET)}

\author[Rodrigo Horruitiner]{Rodrigo Horruitiner$^{4}$}
\email{rhorruitiner@pucp.edu.pe}

\author[Christian Valqui]{Christian Valqui$^{4,}$$^5$}
\address{$^4$Pontificia Universidad Cat\'olica del Per\'u, Secci\'on Matem\'aticas, PUCP,
Av. Universitaria 1801, San Miguel, Lima 32, Per\'u.}

\address{$^5$Instituto de Matem\'atica y Ciencias Afines (IMCA) Calle Los Bi\'ologos 245. Urb San C\'esar.
La Molina, Lima 12, Per\'u.}
\email{cvalqui@pucp.edu.pe}
\thanks{Christian Valqui was supported by PUCP-DGI-CAP-2016-329.}

\begin{abstract}
We describe an algorithm that computes possible corners of hypothetical counterexamples to the Jacobian Conjecture up to a given bound. Using this algorithm we compute the possible families corresponding to $\gcd(\deg(P),\deg(Q))\le 35$, and all the pairs $(\deg(P),\deg(Q))$ with $\max(\deg(P),\deg(Q))\le 150$ for any hypothetical counterexample.
\end{abstract}

\maketitle

\setcounter{tocdepth}{2}
\tableofcontents

\begin{spacing}{0.95}

\section*{Introduction}

Let $K$ be a characteristic zero field and let $L\coloneqq K[x,y]$ be the polynomial algebra in two indeterminates. The Jacobian Conjecture (JC) in dimension two stated by Keller in \cite{K} says that any pair of polynomials $P,Q\in L$ with $[P,Q]\coloneqq \partial_x P \partial_y Q - \partial_x Q \partial_y P\in K^{\times}$ defines an automorphism $f$ of $L$ via $f(x)\coloneqq P$ and $f(y)\coloneqq Q$. If this conjecture is false, then there exist $P,Q\in L$ such that $[P,Q]=K^{\times}$, and there exist $m,n,a,b\in \mathds{N}$, such that $m,n>1$ are coprime, $a<b$, the support of $P$ is contained in the rectangle with vertices $\{(0,0), m(a,0),m(a,b),m(0,b)\}$, the support of $Q$ is contained in the rectangle with vertices $\{(0,0),n(a,0),n(a,b),n(0,b)\}$, the point $m(a,b)$ is in the support of $P$ and the point $n(a,b)$ is in the support of $Q$. Note that $\deg(P)=m(a+b)$ and $\deg(Q)=n(a+b)$.

In~\cite{H} Heitmann establishes several restrictions on these possible corners $(a,b)$ and in \cite{H}*{Theorem~2.24} he determines various of these possible corners $(a,b)$. Moreover in~\cite{H}*{Theorem~2.25}, for some of these corners, he finds families $\{(r+sj,t+uj):j\in\mathds{N}\}$ of admissible pairs $(m,n)$. These corners were also found in~\cite{GGV1}*{Remark~7.14}, using more elementary methods and discrete geometry on the plane. In both articles the lists of possible corners where given without a formal proof, referring to a computer program.

In \cite{GGV2} we found more conditions on the points $(a,b)$, and in this article we present an algorithm that generates the list of points satisfying all the conditions up to a fixed upper bound for $a+b$. Naturally this list is included in the one found in~\cite{GGV1}*{Remark~7.14}. The algorithm also determines the families of admissible pairs $(m,n)$, for each of these corners.

In order to exploit the simple geometric ideas of our method we also present a graphic interface of the program which includes all the filters and allows the user to grasp in detail if and why a certain corner is admissible or not.

At the end we list all possible corners $(a,b)$ with $a\!+\!b\!<\! 36$, and their corresponding $(m,n)$-families. Furthermore if $(P,Q)$ is a counterexample to the Jacobian Conjecture that satisfy the inequality $\gcd(\deg(P),\deg(Q))< 36$, then we give additional information on the Newton polygons of $P$ and~$Q$. We also provide the same information for the counterexamples that satisfy $\max\{\deg(P),\deg(Q)\}\le~150$.

\smallskip

Along this paper we will freely use the notations of \cite{GGV1}.

\section{Restrictions on possible last lower corners}

The first step in our strategy is to construct a set of points in $\mathds{N}_0\times \mathds{N}_0$, that includes all the possible last lower corners (see \cite{GGV2}*{Definition~3.17}).

\smallskip

\begin{definition}
Let $(a,b)\in \mathds{N}\times \mathds{N}_0$ and $(\rho,\sigma)\in\mathfrak{V}\cap \hspace{0.7pt}[(0,-1),(1,-1)[\hspace{0.7pt}$ (see \cite{GGV1}*{Definition~1.5}). We say that $((a,b),(\rho,\sigma))$ is a {\em possible final pair} if one of the following conditions is fulfilled:

\begin{enumerate}[itemsep=0.5ex,topsep=0.5ex]

\item $b=0$ and $(\rho,\sigma) = (0,-1)$,

\item there exists an admissible chain of length $k\in\mathds{N}$ (see~\cite{GGV2}*{Definition~3.15})
$$
\qquad\quad\mathfrak{C}=\bigl((C_j)_{j\in\{0,\dots,k\}},(R_j)_{j\in\{1,\dots,k\}},(\rho_j,\sigma_j)_{j\in\{1,\dots,k\}}\bigr),
$$
with $C_k = (a,b)$ and $(\rho_k,\sigma_k) = (\rho,\sigma)$.

\end{enumerate}
\end{definition}

\begin{remark}
Recall from~\cite{GGV2}*{Definition~3.17} that if $((a,b),(\rho,\sigma))$ is a possible final pair, then $(a,b)$ is said to be a possible last lower corner.
\end{remark}

\begin{remark}
By~\cite{GGV2}*{Definition~3.15(6)}, if $((a,b),(\rho,\sigma))$ is a possible final pair, then $b<a$.
\end{remark}

\begin{remark}
By~\cite{GGV2}*{Remark~3.19}, we know that if $a > 2b> 0$, then $((a,b),(1,-2))$ is a possible final pair.
\end{remark}

\begin{remark}
By~\cite{GGV2}*{Proposition~3.25}, if $(a,b)$ is a possible last lower corner, then $b\le (a-b-1)^2$, which, since $a\ge 1$ and $b<a$, is equivalent to $b\le \frac{1}{2} \left(2 a-\sqrt{4 a-3}-1\right)$. 
\end{remark}


\begin{proposition}\label{cotasjacobiano2.12}
If $((a,b),(\rho,\sigma))$ is a possible final pair with $b\!>\!0$ and $a\!\le\!2b$, then \hbox{$v_{\rho,\sigma}(a,b)\!\ge\!\rho$} and there exist a possible final pair $((r,s),(\rho',\sigma'))$ such that:

\begin{enumerate}[itemsep=0.5ex,topsep=0.5ex]

\item $r<a$, $s<b$ and $r-s<a-b$,

\item $v_{\rho,\sigma}(r,s) = v_{\rho,\sigma}(a,b)$,


\item $\ov{\vartheta} \leq \gcd(a - r, b - s)$ or $\ov{\vartheta}\mid \gcd(r,s)$, where $\ov{\vartheta}\coloneqq \frac{\rho a+\sigma b}{\gcd(\rho+\sigma,\rho a+\sigma b)}$.

\end{enumerate}
\end{proposition}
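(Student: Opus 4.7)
The plan is to unpack the admissible chain ending at $(a,b)$ and work on its last segment. Since $b>0$, clause~(1) of the definition of possible final pair fails, so clause~(2) provides an admissible chain
$$
\mathfrak{C}=\bigl((C_j)_{j=0}^k,(R_j)_{j=1}^k,(\rho_j,\sigma_j)_{j=1}^k\bigr),\qquad k\ge 1,
$$
with $C_k=(a,b)$ and $(\rho_k,\sigma_k)=(\rho,\sigma)$. Since $(\rho,\sigma)\ne(0,-1)$ (otherwise clause~(1) would force $b=0$), we have $\rho\ge 1$; moreover, every primitive vector in the open arc from $(0,-1)$ to $(1,-1)$ satisfies $-\sigma>\rho$, so $\rho+\sigma\le-1<0$.

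For part~(1), I would derive the bound $v_{\rho,\sigma}(a,b)\ge\rho$ directly from the admissibility axioms of \cite{GGV2}*{Definition~3.15} applied to the last step of $\mathfrak{C}$. Geometrically, the final edge has direction $(\sigma,-\rho)$, and the constraint that its integer continuation southwest of $(a,b)$ remains in $\mathds{N}\times\mathds{N}_0$ (i.e.\ does not immediately leave the first quadrant) forces the valuation at $(a,b)$ to be at least the valuation at $(1,0)$, namely $\rho a+\sigma b\ge\rho$. The hypothesis $a\le 2b$, $b>0$ is what prevents us from being in the trivial configuration covered by the earlier remark about $(1,-2)$.

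For part~(2), the natural candidate is $(r,s)=(a+t\sigma,\,b-t\rho)$ for a suitable positive integer $t$: a lattice point strictly southwest of $(a,b)$ on the $(\rho,\sigma)$-edge through it. Part~(1) guarantees that such $t\ge 1$ exists with $(r,s)\in\mathds{N}\times\mathds{N}_0$. The inequalities $r<a$, $s<b$, $r-s<a-b$ are then immediate from $\sigma<0$, $\rho\ge 1$, and $\rho+\sigma<0$ respectively, and $v_{\rho,\sigma}(r,s)=v_{\rho,\sigma}(a,b)$ holds by construction. That $((r,s),(\rho',\sigma'))$ is itself a possible final pair for some $(\rho',\sigma')$ I would obtain by modifying $\mathfrak{C}$ so as to terminate at $(r,s)$, using that $(r,s)$ lies on the same edge as $(a,b)$ and hence is reached by an admissible chain whose last direction remains $(\rho,\sigma)$ with a shortened final segment; the axioms of \cite{GGV2}*{Definition~3.15} have to be checked term by term for the truncated chain.

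The main obstacle is part~(3), the dichotomy involving $\ov{\vartheta}$. Since $\gcd(\rho,\sigma)=1$, the parameterisation gives $\gcd(a-r,b-s)=t$, so the first alternative $\ov{\vartheta}\le\gcd(a-r,b-s)$ becomes $\ov{\vartheta}\le t$. The strategy is to choose $t$ maximally subject to $(r,s)$ being a possible final pair: if this maximal $t$ is at least $\ov{\vartheta}$, the first alternative holds and we are done. Otherwise, maximality combined with the primitive step $(\sigma,-\rho)$ and the identity $\rho r+\sigma s=\rho a+\sigma b$ should force $(r,s)$ to sit at a position on the edge whose coordinates are both divisible by $\ov{\vartheta}$, yielding the second alternative $\ov{\vartheta}\mid\gcd(r,s)$. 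This divisibility analysis—together with a careful existence check that each candidate step along the edge produces a valid possible final pair—is the technical heart of the proposition and will require a detailed invocation of the axioms in \cite{GGV2}.
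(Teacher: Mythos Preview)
Your approach has a genuine gap centred on the choice of $(r,s)$ and the argument for item~(3).

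The paper simply takes $(r,s)\coloneqq C_{k-1}$, the previous corner of the admissible chain. This point automatically lies on the $(\rho,\sigma)$-line through $(a,b)$ (items~(4)--(5) of \cite{GGV2}*{Definition~3.15}), and truncating $\mathfrak{C}$ by deleting its last step yields an admissible chain ending at $C_{k-1}$ with last direction $(\rho_{k-1},\sigma_{k-1})$ (or $(0,-1)$ if $k=1$), so $((r,s),(\rho',\sigma'))$ is a possible final pair with no verification required. Your plan to pick an arbitrary or maximal $t$ and then check the axioms for a ``shortened final segment'' is both harder and unnecessary; moreover, for $t$ not corresponding to $C_{k-1}$ the truncated object may fail the axioms of \cite{GGV2}*{Definition~3.15}, since these involve the auxiliary data $R_j$, not just the corners.

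The deeper problem is item~(3). The dichotomy $\ov\vartheta\le\gcd(a-r,b-s)$ or $\ov\vartheta\mid\gcd(r,s)$ is \emph{not} a purely geometric fact about lattice points on the edge; it comes from \cite{GGV2}*{Proposition~3.12} applied with $R=R_k$, the polynomial datum attached to the last segment of $\mathfrak{C}$. Items~(7)--(8) of \cite{GGV2}*{Definition~3.15} guarantee the hypotheses of that proposition, the assumption $a\le 2b$ rules out its case~(1), and \cite{GGV2}*{Remark~3.13} gives $\vartheta/t'=-(\rho a+\sigma b)/(\rho+\sigma)$, whence $\ov\vartheta\mid\vartheta$; items~(2)--(3) of Proposition~3.12 then yield the dichotomy directly for $(r,s)=C_{k-1}$. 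Your maximality argument never touches $R_k$ and therefore cannot recover this arithmetic information; the sentence ``maximality \dots\ should force $(r,s)$ to sit at a position whose coordinates are both divisible by $\ov\vartheta$'' is a hope, not an argument, and there is no mechanism in your outline that would make it true. Likewise, the inequality $v_{\rho,\sigma}(a,b)\ge\rho$ is a stated axiom (\cite{GGV2}*{Definition~3.15(7)}), not something to be rederived geometrically.
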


\begin{proof}
By hypothesis there exists an admissible chain
$$
\mathfrak{C}=\bigl((C_j)_{j\in\{0,\dots,k\}},(R_j)_{j\in\{1,\dots,k\}},(\rho_j,\sigma_j)_{j\in\{1,\dots,k\}}\bigr) \qquad\text{with $C_k = (a,b)$ and $(\rho_k,\sigma_k) = (\rho,\sigma)$.}
$$
Note that $k\ge 1$ and set
$$
(r,s) \coloneqq C_{k-1}\qquad\text{and}\qquad (\rho',\sigma')\coloneqq \begin{cases}(\rho_{k-1},\sigma_{k-1}) & \text{if $k>1$,}\\(0,-1) & \text{if $k=1$.} \end{cases}
$$
By \cite{GGV2}*{Definition~3.15(7)} we know that $v_{\rho,\sigma}(a,b) \ge \rho$. We next prove the rest of the proposition. Item~(1) follows from~\cite{GGV2}*{Remark~3.16}, while item~(2) follows from items~(4) and (5) of~\cite{GGV2}*{Def\-inition~3.15}. Moreover, by items~(7) and~(8) of~\cite{GGV2}*{Def\-inition~3.15}, the hypothesis of~\cite{GGV2}*{Propo\-sition~3.12} are satisfied with $R=R_k$. Since $a \leq 2b$, case~(1) of that proposition is impossible. Let $\theta$ and $t'$ be as in~\cite{GGV2}*{Proposition~3.12}. By~\cite{GGV2}*{Remark~3.13}
$$
\frac{\vartheta}{t'}=-\frac{v_{\rho,\sigma}(R)}{\rho+\sigma}  = -\frac{\rho a+\sigma b}{\rho+\sigma}.
$$
Hence $\ov{\vartheta}\mid \vartheta$, and so item~(3) follows from items~(2) and~(3) of~\cite{GGV2}*{Proposition~3.12}.
\end{proof}

Based on the previous results in Algorithm~\ref{PLLC} we present a method for the generation of a set $\PLLC$ that includes all possible last lower corners $(a,b)$ with $a\le x_{max}$ for a given $x_{max}$. In the algorithm we use an auxiliary list $\PFL$.

\begin{center}
\begin{algorithm}[H]
\DontPrintSemicolon
\KwIn{Maximum x coordinate value $x_{max} > 0$.}
\KwOut{A list $\PLLC$, that includes all the possible last lower corners $(a,b)$ with $a\le x_{max}$.}
\For{$a\gets 1$ \KwTo $x_{max}$}{
    $b\gets 0$\;
    \While{$b\le \frac{1}{2} \left(2 a-\sqrt{4 a-3}-1\right)$}{
        \uIf{$b=0$}{$(\rho,\sigma)_{a,b}\gets (0,-1)$, add $((a,b),(\rho,\sigma)_{a,b})$ to $\PFL$ and add $(a,b)$ to $\PLLC$\;}
        \uElseIf{$a > 2b>0$}{
        $(\rho,\sigma)_{a,b}\gets (1,-2)$, add $((a,b),(\rho,\sigma)_{a,b})$ to $\PFL$ and add $(a,b)$ to $\PLLC$\;
        }
        \Else{set $(\rho,\sigma)_{a,b}\coloneqq (1,-1)$ \;
            \For{$\bigl((r,s),(\rho,\sigma)_{r,s}\bigr)$ \normalfont{\textrm{in}} $\PFL$ \normalfont{\textrm{such that}} $r<a$, $s<b$ \normalfont{\textrm{and}} $r-s<a-b$ }{
                $N_1 \gets \gcd(a-r, b-s)$\;
                $N_2 \gets \gcd(r,s)$ \;
                $(\rho, \sigma) \gets \dfrac{1}{N_1}(b-s, r-a)$ \;
                $g \gets \gcd(\rho + \sigma, \rho a + \sigma b)$ \;
                $\ov{\vartheta} \gets \dfrac{\rho a + \sigma b}{g}$ \;
                \If{
                    $(\rho,\sigma)_{r,s}<(\rho,\sigma)<(\rho,\sigma)_{a,b}$, $v_{\rho,\sigma}(a,b) \ge  \rho$ {\normalfont \textrm{and}}
                    $(\ov{\vartheta} \leq N_1$ {\normalfont \textrm{or}} $\ov{\vartheta} \mid N_2$)
                }{
                    $(\rho,\sigma)_{a,b}\gets (\rho,\sigma)$\;
                }
            }
                \If{$(\rho,\sigma)_{a,b}<(1,-1)$
                }{
                    add $\bigl((a,b),(\rho,\sigma)_{a,b}\bigr)$ to $\PFL$ and add $(a,b)$ to $\PLLC$\;
                }
        }
   $b\gets b+1$\; }
}
\Return{$\PLLC$.}
\caption{GetPossibleLastLowerCorners}
\label{PLLC}
\end{algorithm}
\end{center}

\end{spacing}

\begin{spacing}{0.975}

\section{Construction of admissible complete chains up to a certain bound}

Assume that the Jacobian Conjecture is false and define
\begin{equation}\label{def B}
B\coloneqq \min\bigl\{\gcd(v_{1,1}(P),v_{1,1}(Q)):\text{where $(P,Q)$ runs on the counterexamples of J.C.}\bigr\}.
\end{equation}
Then, by \cite{GGV1}*{Corollary~5.21} there exists a counterexample $(P,Q)$ and $m,n\in \mathds{N}$ coprime such that $(P,Q)$ is a standard $(m,n)$-pair and a minimal pair (that is, the greatest common divisor of $v_{11}(P)$ and $v_{11}(Q)$ is $B$). Let $A_0$ be as in Remark~\ref{rem 2.21}. By \cite{GGV1}*{Proposition~5.2 and Corollary~5.21(3)}
$$
A_0 = \frac{1}{m}\en_{10}(P)  \quad\text{and}\quad \gcd(v_{11}(P),v_{11}(Q)) = v_{11}(A_0).
$$
This point $A_0$ corresponds to $(a,b)$ in the introduction. In Theorem~\ref{standard pair generates complete chain} below, we obtain a chain
\begin{equation*}
(\mathcal{C}_0,\dots,\mathcal{C}_j,\mathcal{A}_{j+1}) = \bigl((\mathcal{A}_0,\mathcal{A}_0'),\dots, (\mathcal{A}_j,\mathcal{A}_j'),\mathcal{A}_{j+1}\bigr),
\end{equation*}
such that $A_0$ is the geometric realization of $\mathcal{A}_0$ (see Definition~\ref{realizacion geometrica}), and that satisfies (among others) certain geometric conditions, which are codified in Definition~\ref{complete chain}. Then, we show that this chain also satisfies certain arithmetic conditions (see the comment below Definition~\ref{cond div}). The chains meeting the requirements of Definitions~\ref{complete chain} and~\ref{cond div} are called admissible complete chains. In Algorithm~\ref{Main algorithm} we construct all the admissible complete chains that satisfy $v_{11}(A_0)\le M$ for a given positive integer bound $M$.

By Theorem~\ref{standard pair generates complete chain}
and Remark~\ref{admis} we know that $\mathcal{A}_0$ is the first coordinate of $\mathcal{C}_0$ for one of the admissible complete chains $(\mathcal{C}_0,\dots,\mathcal{C}_j,\mathcal{A}_{j+1})$ obtained running Algorithm~\ref{Main algorithm} with $M \ge B$. For example we obtain immediately that the Jacobian Conjecture is false, then $B\ge 16$, since there are no admissible complete chains with $v_{11}(A_0)< 16$ (this result was already obtained in \cite{GGV1}). More importantly, we will see that many of the admissible complete chains obtained in Algorithm~\ref{GetCompleteChains} can not come from a standard $(m,n)$-pair as in Theorem~\ref{standard pair generates complete chain}.

\subsection{Valid edges}

In this subsection and in the next one we introduce the basic ingredients for the definition and construction of the complete chains.

\smallskip

For each $l\in \mathds{N}$ we let $\mathds{N}_{\!(l)}$ denote the set $\{(a,l): a\in \mathds{N}\}$. In the sequel we will write $a\wrs l$ instead of $(a,l)$. Moreover we will use the notation $I\coloneqq ](1,-1),(1,0)]$.

\begin{definition}\label{realizacion geometrica}
A {\em corner} is a pair $(a\wrs l,b)$ with $a\wrs l\in \mathds{N}_{\!(l)}$ and $b\in \mathds{N}_0$. For $l=1$ we will write $(a,b)$ instead of $(a\wrs 1,b)$. The {\em geometric realization} of a corner $\mathcal{A}=(a\wrs l,b)$ is the point $A\coloneqq \bigl(\frac{a}{l},b\bigr) \in \frac{1}{l}\mathds{N}\times \mathds{N}_0$.
\end{definition}

\smallskip

Let $l\in \mathds{N}$. In the rest of this section given $\mathcal{A},\mathcal{A}'\in \mathds{N}_{\!(l)}\times\mathds{N}_0$ with $\mathcal{A}\ne \mathcal{A}'$, we write
$$
\mathcal{A} = (a\wrs l,b),\quad \mathcal{A}' = (a'\wrs l,b'), \quad (\rho,\sigma)\coloneqq \dir (A-A') \quad\text{and}\quad \gap(\rho,l)\coloneqq \frac{\rho}{\gcd(\rho,l)}.
$$

\begin{definition}\label{valid edges}
Set $d\coloneqq \gcd(a,b)$, $\ov a\coloneqq \frac{a}{d}$ and $\ov b\coloneqq \frac{b}{d}$, The pair $(\mathcal{A},\mathcal{A}')$ is called a {\em valid edge} if

\begin{enumerate}[itemsep=1.0ex,topsep=1.0ex]

\item $(\rho,\sigma)\in I$,

\item $v_{1,-1}(A')\ne 0$, $v_{1,-1}(A)<0$ and $v_{1,-1}(A)<v_{1,-1}(A')$,

\item there exist $\mathcal{\enF}\in\mathds{N}_{\!(l)}\times\mathds{N}$ and $\mu\in \mathds{N}$, with $\mu\le l(bl-a)+1/\ov b$ and $d\nmid \mu$, such that
$$
\mathcal{\enF}=\frac{\mu}{d} \mathcal{A}\coloneqq \mu(\ov a\wrs l,\ov b),\quad v_{\rho,\sigma}(\enF) = \rho+\sigma \quad\text{and}\quad \text{if $l=1$, then $\mu<d$.}
$$

\item If $l=1$ and $v_{1,-1}(A')>0$, then $A'$ is a possible last lower corner.

\end{enumerate}
The valid edge $(\mathcal{A},\mathcal{A}')$ is called {\em simple} if $v_{01}(\enF)-1=\gap(\rho,l)$ and ($\gap(\rho,l)>1$ or $v_{01}(A')>0$).
\end{definition}

\begin{remark}\label{zazaza}
By item~(1) the last inequality in item~(2) is equivalent to $v_{01}(A-A')>0$. Moreover $d>1$ since $d\nmid \mu$. We can also replace condition~(3) by

\begin{enumerate}[itemsep=1.0ex,topsep=1.0ex]

\item[(3')] $\exists\ \mu\in \mathds{N}$, such that $\frac{\mu}{d}=\frac{\rho+\sigma}{v_{\rho,\sigma}(A)}$, $\mu\le l(bl-a)+1/\ov b$, $d\nmid \mu$ and if $l=1$, then $\mu<d$.

\end{enumerate}
Moreover, such a $\mu$ univocally determines $\mathcal{\enF}$ via the equality $\mathcal{\enF} = \frac{\mu}{d}\mathcal{A}$. Write $\mathcal{\enF}=(f_1\wrs l,f_2)$. Since $v_{\rho,\sigma}(\enF) = \rho+\sigma$ and $f_2\ge 1$,
$$
(\rho,\sigma) = \frac{1}{\gcd(f_1-l,f_2l-l)}(f_2l-l,l-f_1).
$$
This equality implies $f_2>1$, because by condition~(1) we have $\rho>0$. Thus, by~\cite{GGV2}*{Remark~3.9} we know that
$$
\gap(\rho,l) = \frac{f_2-1}{\gcd(f_1-l,f_2-1)}.
$$
Consequently $v_{01}(\enF) -1= \gap(\rho,l)$ if and only if $\gcd(f_1-l,f_2-1)=1$.
\end{remark}

\begin{notation}
Fixed $l\in \mathds{N}$ and given $A = \bigl(\frac{a}{l},b\bigr)\in \frac{1}{l}\mathds{N}\times \mathds{N}_0$ we set $\mathcal{A}\coloneqq (a\wrs l,b)\in \mathds{N}_{\!(l)}\times \mathds{N}_0$.
\end{notation}

In Algorithm~\ref{GetStartingEdges} we obtain a list $\StartingEdges$ consisting of all valid edges $(\mathcal{A},\mathcal{A}')$ starting with a given $A\in \mathds{N}\times \mathds{N}$ such that $v_{1,-1}(A)<0$. We use freely the results of Remark~\ref{zazaza}. Before running this algorithm with input a corner $\mathcal{A}=(a,b)$ it is necessary to run Algorithm~\ref{PLLC} with input greater than or equal to $a$, in order to obtain a list $\PLLC$.

\begin{center}
\begin{algorithm}[H]
  \DontPrintSemicolon
     \KwIn{A corner $A=(a,b)\in \mathds{N}\times\mathds{N}$ with $a<b$, and a list $\PLLC$.}
     \KwOut{A list $\StartingEdges$, consisting of all valid edges $(\mathcal{A},\mathcal{A}')$.}	
    $d \gets \gcd(a,b)$\;
    \For{$\mu=1$ \KwTo $d-1$}{
        $\enF \gets \frac{\mu}{d}(a,b)$\;
        $(\rho, \sigma) \gets \dir(\enF-(1,1))$\;
            \For{$i=1$ \KwTo $\Bigl\lfloor \frac{b}{\rho}\Bigr\rfloor$}{
            $A'\gets (a,b)-i(-\sigma,\rho)$\;
            \If{$v_{1,-1}(A')< 0$ \normalfont\textbf{ or }
              ( $v_{1,-1}(A')> 0$\normalfont\textbf{ and } $A'\in \PLLC$)}{
                 \bf{add} $(\mathcal{A},\mathcal{A}')$ \bf{to} $\StartingEdges$\;
                }
            }
        }
 {\bf RETURN} $\StartingEdges$
 \caption{GetStartingEdges}
\label{GetStartingEdges}
\end{algorithm}
\end{center}

In the following proposition we show among other things how a regular corner of an $(m,n)$-pair $(P,Q)$ gives rise to a valid edge.

\begin{proposition}\label{multiplicidad}
Let $l\ge 1$ and let $(P,Q)$ be an $(m,n)$-pair in $L^{(l)}$. Assume that if $l=1$, then $(P,Q)$ is a standard $(m,n)$-pair in $L$ (see \cite{GGV1}*{Definition~4.3}). Let $(A,(\rho,\sigma))$ be a regular corner  of $(P,Q)$ (see~\cite{GGV1}*{Definition~5.5}) and let $A'\coloneqq \frac{1}{m}\st_{\rho,\sigma}(P)$. Write
$$
\ell_{\rho,\sigma}(P) = x^{m\frac{a'}{l}} y^{mb'} p(z)\quad\text{with $z\coloneqq x^{-\frac{\sigma}{\rho}}y$, $p\in K[z]$ and $p(0)\ne 0$.}
$$
The following facts hold:

\begin{enumerate}[itemsep=1.0ex,topsep=1.0ex]

\item If $l = 1$, then the regular corner $(A,(\rho,\sigma))$ is of type~II.

\item If $(A,(\rho,\sigma))$ is of type~II (see the comments above~\cite{GGV1}*{Definition~5.9}), then $(\mathcal{A},\mathcal{A}')$ is a valid edge.

\item If $\lambda\in K^{\times}$ is a root of $p$, then
$$
\frac{m_{\lambda}}{m}\le \frac{v_{01}(A-A')}{\gap(\rho,l)},\quad\text{where $m_{\lambda}$ denotes the multiplicity of $\lambda$.}
$$
If moreover $(\mathcal{A},\mathcal{A}')$ is simple, then $\frac{m_{\lambda}}{m}= \frac{v_{01}(A-A')}{\gap(\rho,l)}$.

\item If $(A,(\rho,\sigma))$ is of type~II.b), then there exists a root $\lambda\in K^{\times}$ of $p$ such that
    \begin{equation}\label{5.16}
    \quad\qquad b'< \frac{\rho a + \sigma b l}{l(\rho+\sigma)} \le \frac{m_{\lambda}}{m},
    \end{equation}
where $m_{\lambda}$ denotes the multiplicity of $\lambda$ in $p$.

\end{enumerate}
\end{proposition}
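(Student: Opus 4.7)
The plan is to verify each of the four items in turn. The ingredients are the definition of regular corner and of type~II.b from the discussion around \cite{GGV1}*{Definition~5.9}, the definition of standard $(m,n)$-pair in \cite{GGV1}*{Definition~4.3}, and the algebraic description of $\ell_{\rho,\sigma}(P)$ provided in the statement.

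For item~(1), I would use the rectangular support constraint of a standard pair: when $l=1$, the support of $P$ passes through $(0,0)$, which rules out the configuration defining a type~I regular corner, so only type~II can occur. For item~(2), I would check the four defining conditions of a valid edge one by one. Condition~(1) follows from the defining range of the direction at a type~II regular corner. Condition~(2) reads off the positions $A=\frac{1}{m}\en_{\rho,\sigma}(P)$ and $A'=\frac{1}{m}\st_{\rho,\sigma}(P)$ inside the support rectangle. For condition~(3), the top-$z$ term of $p$ exhibits a corner $\mathcal{\enF}$ of the form $\frac{\mu}{d}\mathcal{A}$ with $v_{\rho,\sigma}(\enF)=\rho+\sigma$; the bounds $\mu\le l(bl-a)+1/\ov b$, $d\nmid\mu$, and (for $l=1$) $\mu<d$ come from integrality of the support in $L^{(l)}$ and from the regularity of the corner, which prevents further reduction. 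Condition~(4) is the translation: when $l=1$ and $v_{1,-1}(A')>0$, the point $A'$ is by construction a last lower corner in the sense of \cite{GGV2}*{Definition~3.17}.

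For item~(3), the central computation is to relate the one-variable factorization $p(z)=c\prod_i(z-\lambda_i)^{m_i}$ to the bivariate structure. The degree of $p$ equals $m\cdot v_{01}(A-A')$, while each factor $(z-\lambda)^{m_\lambda}$ arises, after the change of variable $z=x^{-\sigma/\rho}y$, from a homogeneous factor of $\ell_{\rho,\sigma}(P)$ whose support lies on a lattice segment parallel to $(-\sigma,\rho)$ with step $\gap(\rho,l)$. This yields $m_\lambda\cdot\gap(\rho,l)\le m\cdot v_{01}(A-A')$, which is the bound in~(3). Under the simplicity hypothesis, $\gap(\rho,l)=v_{01}(\enF)-1$, and the auxiliary clause of the definition of simple edge forces the inequality to be an equality for every root. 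For item~(4), the type~II.b hypothesis supplies a distinguished root $\lambda$ whose associated monomial saturates the bracket condition $[P,Q]\in K^{\times}$; computing its $(\rho,\sigma)$-valuation produces the lower bound on $m_\lambda/m$, while the strict inequality $b'<\frac{\rho a+\sigma bl}{l(\rho+\sigma)}$ is the defining feature of type~II.b.

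I expect the main obstacle to be item~(3): the lattice bookkeeping that converts one-variable multiplicities into bivariate multiplicities through $\gap(\rho,l)$ is delicate, and the equality case in the simple setting requires exploiting both clauses of the definition of simple edge in order to exclude degenerate saturations. Items~(1), (2) and~(4) reduce, once this bridge is in place, to a careful reading of the type classification and of the standard pair conditions.
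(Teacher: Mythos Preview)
Your plan has the right overall shape but misses the central object that drives items~(2) and~(3): the auxiliary $(\rho,\sigma)$-homogeneous element $F$ from \cite{GGV1}*{Theorem~2.6 and Proposition~5.14}, characterized by $[F,\ell_{\rho,\sigma}(P)]=\ell_{\rho,\sigma}(P)$ and $v_{\rho,\sigma}(F)=\rho+\sigma$. In item~(2), the corner $\mathcal{\enF}$ is $\en_{\rho,\sigma}(F)$, not something read off from the top of $p$; the proportionality $\enF=\frac{\mu}{d}A$ and the divisibility $d\nmid\mu$ are outputs of that proposition, not of the support lattice of $P$. Likewise the bound $\mu<d$ when $l=1$ does not come from ``regularity of the corner'': it needs the nontrivial input (via \cite{vdE}*{Theorem~10.2.1 and Proposition~10.2.6}) that $\Supp(P)$ meets the $x$-axis, which forces $v_{\rho,\sigma}(A)\ge\rho\ge\rho+\sigma=v_{\rho,\sigma}(\enF)$.

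For item~(3), your lattice-step heuristic does not give the multiplicity bound, because an individual factor $(z-\lambda)^{m_\lambda}$ need not lie in $L^{(l)}$. The actual mechanism is algebraic: since $\ell_{\rho,\sigma}(P)\in L^{(l)}$, one has $p(z)=\ov p(z^k)$ with $k=\gap(\rho,l)$ (this is \cite{GGV2}*{Remark~3.9}), and then $m_\lambda\le\deg\ov p=\deg p/k=m\,v_{01}(A-A')/k$ by \cite{GGV2}*{Remark~3.8}. The equality in the simple case again requires $F$: writing $F=x^{u/l}y^v f(z)$ with $f(z)=\ov f(z^k)$, the simple hypothesis forces $\deg\ov f=1$, and then \cite{GGV1}*{Proposition~2.11(3)} gives $\ov p(z^k)=(z^k-c)^t$, so every root has multiplicity exactly $t$. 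Finally, in item~(4) the strict inequality $b'<\frac{\rho a+\sigma bl}{l(\rho+\sigma)}$ is \emph{not} the definition of type~II.b); type~II.b) means $v_{1,-1}(A')>0$, and the inequality is derived from $\rho(\frac{a'}{l}-b')>0$ together with $v_{\rho,\sigma}(A')=v_{\rho,\sigma}(A)$. The lower bound on $m_\lambda/m$ is \cite{GGV1}*{Proposition~5.16}, which you should invoke rather than re-derive from the bracket condition.
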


\begin{proof} 1)\enspace By~\cite{GGV1}*{Remark~5.10 and Propositions~5.22 and~6.1}.

\smallskip

\noindent 2)\enspace First note that by~\cite{GGV1}*{Remark~1.8} we have $A\in \frac{1}{l}\mathds{N}\times \mathds{N}_{(0)}$. We now check that the pair $(\mathcal{A},\mathcal{A}')$ satisfies conditions~(1)--(4) of Definition~\ref{valid edges}. The fact that $(\rho,\sigma)\in~I$ and the inequality $v_{1,-1}(A)<0$ follow from \cite{GGV1}*{Definition~5.5}). Moreover, $v_{1,-1}(A')\ne 0$ by \cite{GGV1}*{Corollary~5.7(1) and Theorem~2.6(4)}, while $v_{1,-1}(A)<v_{1,-1}(A')$ by Remark~\ref{zazaza}, because $v_{01}(A')<v_{01}(A)$. So conditions~(1) and~(2) are true. Let $\mu$ and $F$ be as in~\cite{GGV1}*{Proposition~5.14} and set $\enF\coloneqq \en_{\rho,\sigma}(F)$. All the assertions in condition~(3), with the exception of the last one, follow from the definition of $\mu$ and items~(3) and~(4) of that proposition. Assume now $l=1$ (which by hypothesis implies that $P,Q\in L$). By \cite{vdE}*{Theorem~10.2.1 and Proposition~10.2.6} there exists $k\in \mathds{N}$ such that $(km,0)\in \Supp(P)$. So
$$
v_{\rho,\sigma}(A)=\frac 1m v_{\rho,\sigma}(P)\ge \frac 1m v_{\rho,\sigma}(km,0)=k\rho\ge\rho\ge \rho+\sigma = v_{\rho,\sigma}(\enF).
$$
Since $\mu v_{\rho,\sigma}(A) = d v_{\rho,\sigma}(\enF)$ and $d\nmid \mu$, this implies that $\mu<d$. We finally prove item~(4). Since $(A,(\rho,\sigma))$ is of type~II and $v_{1,-1}(A')>0$, it is of type~II.b). Consequently if $l=1$ it follows from \cite{GGV1}*{Remark~6.3} that $(A,A',(\rho,\sigma))$ is the starting triple of $(P,Q)$ (see~\cite{GGV1}*{Definition~6.2}), and so condition~(4) is true by~\cite{GGV2}*{Remark~3.23}, because by hypothesis $P,Q\in L$.

\smallskip

\noindent 3)\enspace Let $F$ be as in \cite{GGV1}*{Theorem~2.6} and write
$$
F = x^{\frac{u}{l}} y^v f(z)\quad\text{with $z\coloneqq x^{-\frac{\sigma}{\rho}}y$, $f\in K[z]$ and $f(0)\ne 0$.}
$$
By \cite{GGV2}*{Remark~3.9} there exist $\ov{p},\ov{f}\in K[z]$ such that
$$
p(z) = \ov{p}(z^k)\quad\text{and}\quad f(z) = \ov{f}(z^k),\qquad\text{where $k\coloneqq \gap(\rho,l)$.}
$$
So,
$$
t\coloneqq \deg \ov p= \frac{\deg p}{k} =\frac{v_{01}(\en_{\rho,\sigma}(P)-\st_{\rho,\sigma}(P))}{k}=m \frac{v_{01}(A-A')}{k}.
$$
By~\cite{GGV2}*{Remark~3.8} we have $m_{\lambda}\le \deg \ov{p}$, which yields $\frac{m_{\lambda}}{m}\le \frac{v_{01}(A-A')}{\gap(\rho,l)}$. Assume now that $(\mathcal{A},\mathcal{A}')$ is simple. Since $k = v_{01}(\en_{\rho,\sigma}(F))-1$, we have
$$
k+1 = v_{01}(\en_{\rho,\sigma}(F)) = v_{01}(F) = v + \deg(f) = v + k\deg(\ov{f}),
$$
which implies $\deg(\ov{f}) = v = 1$ or $k=1$, $v=0$ and $\deg(\ov{f}) = 2$. But if $v=0$, then by~\cite{GGV1}*{The\-orem~2.6(2)}
$$
\Bigl(\frac{u}{l},0\Bigr) = \st_{\rho,\sigma} (F) \sim A',
$$
which is impossible since $v_{01}(A')> 0$, since $k=0$ and $(\mathcal{A},\mathcal{A}')$ is simple. Hence, $\deg(\ov f)=1$ and so, by~\cite{GGV1}*{Proposition~2.11(3)} we have $\ov p(z^k)=(z^k-c)^t$ for some constant $c\in K^{\times}$. Consequently, by~\cite{GGV2}*{Remark~3.8}, every linear factor of $p$ has multiplicity $t$. Thus $m_{\lambda}=t=m\frac{v_{01}(A-A')}{\gap(\rho,l)}$, as desired.

\smallskip

\noindent 4)\enspace By \cite{GGV1}*{Proposition~5.16} there exists $\lambda\in K^{\times}$ such that the second inequality in~\eqref{5.16} is true. Since $\rho>0$ and $\frac{a'}{l}-b'>0$, we have
$$
\Bigl(\rho\frac{a'}{l} + \sigma b'\Bigr) - (\rho+\sigma)b' =  \rho \Bigl(\frac{a'}{l} - b'\Bigr) > 0.
$$
Since $\rho+\sigma>0$ and $v_{\rho,\sigma}(A) = v_{\rho,\sigma}(A')$, this implies the first inequality in~\eqref{5.16}.
\end{proof}

\begin{remark}\label{comentarios0} Let $l\ge 1$ and let $(P,Q)$ be an $(m,n)$-pair in $L^{(l)}$. Let $(A,(\rho,\sigma))$ be a regular corner  of $(P,Q)$ and let $A'\coloneqq \frac{1}{m}\st_{\rho,\sigma}(P)$. Write
$$
\ell_{\rho,\sigma}(P) = x^{m\frac{a'}{l}} y^{mb'} p(z)\quad\text{with $z\coloneqq x^{-\frac{\sigma}{\rho}}y$, $p\in K[z]$ and $p(0)\ne 0$,}
$$
If $(A,(\rho,\sigma))$ is of type~I, then all the roots of $p$ are simple. In fact if $p(z) = (z-\lambda)^2\tilde{p}(z)$, then
\begin{align*}
[\ell_{\rho,\sigma}(P),\ell_{\rho,\sigma}(Q)] & = [x^{m\frac{a'}{l}}y^{mb'}(z-\lambda)^2\tilde{p}(z) ,\ell_{\rho,\sigma}(Q)]\\
& = 2(z-\lambda)x^{m\frac{a'}{l}}y^{mb'}\tilde{p}(z) [(z-\lambda),\ell_{\rho,\sigma}(Q)] + (z-\lambda)^2[x^{m\frac{a'}{l}}y^{mb'}\tilde{p}(z) ,\ell_{\rho,\sigma}(Q)],
\end{align*}
which contradicts the fact that $[\ell_{\rho,\sigma}(P),\ell_{\rho,\sigma}(Q)]\in K^{\times}$.
\end{remark}

\begin{remark}\label{comentarios}
Let $l\ge 1$ and let $(P,Q)$ be an $(m,n)$-pair in $L^{(l)}$. Let $(A,(\rho,\sigma))$ be a regular corner  of $(P,Q)$ and let $A'\coloneqq \frac{1}{m}\st_{\rho,\sigma}(P)$. Write
$$
\ell_{\rho,\sigma}(P) = x^{\frac{k}{l}} \mathfrak{p}(z)\quad\text{where $z\coloneqq x^{-\frac{\sigma}{\rho}}y$ and $\mathfrak{p}(z)\in K[z]$.}
$$
Let $\lambda\in K^{\times}$ be a root of $\mathfrak{p}$ of multiplicity $m_{\lambda}$ and let $\gamma\coloneqq \frac{m_{\lambda}}{m}$ (note that $\deg(\mathfrak{p}) = mb$ and that since $\mathfrak{p} = (x^{-\sigma/\rho}y)^{b'} p$, the multiplicity of $\lambda$ as a root of $p$ is also $m_{\lambda}$). By Proposition~\ref{multiplicidad}(3)
$$
\gamma\le \frac{b-b'}{\gap(\rho,l)} \le b.
$$
Hence, if $b = \gamma$, then $b'=0$, $\gap(\rho,l) = 1$ and $\mathfrak{p}(z) = \mu(z-\lambda)^{mb}$, and consequently $(A,(\rho,\sigma))$ is not of type~II. Since $mb>1$ it follows from Remark~\ref{comentarios0} that it is not of type~I either, and so it is necessarily of type~III. In line~7 of Algorithm~\ref{GetGeneratedCorners} we set $\gmax\coloneqq \min\left\{\frac{b-b'}{\gap(\rho,l)},b-1\right\}$ in order to avoid the regular corners of type~III. We can ignore these corners, since they do not appear in a complete chain of an $(m,n)$-pair (see Proposition~\ref{standard pair generates complete chain}). Note that from $b'=0$ and $\gap(\rho,l) = 1$ it follows that $(\mathcal{A},\mathcal{A}')$ is not simple.
\end{remark}

\subsection{The children of a valid edge}\label{the children}


Let $(P,Q)$ be an $(m,n)$-pair in $L^{(l)}$, let $(A,(\rho,\sigma))$ be a regular corner of type~II of $(P,Q)$ and let $A'\coloneqq \frac{1}{m}\st_{\rho,\sigma}(P)$. If $(A,(\rho,\sigma))$ is of type~II.b), then applying~\cite{GGV1}*{Propositions~5.16 and~5.18(4)}, we obtain a regular corner $(A_1,(\rho',\sigma'))$ of an $(m,n)$-pair $(P_1,Q_1)$. In the sequel we will call $\mathcal{A}_1$ the corner generated by $(\mathcal{A},\mathcal{A}')$. If moreover $(A_1,(\rho',\sigma'))$ is of type~II, then we say that $(\mathcal{A}_1,\mathcal{A}_1')$, where $A'_1\coloneqq \frac{1}{m}\st_{\rho',\sigma'}(P_1)$, is a child of $(\mathcal{A},\mathcal{A}')$. On the other hand, if $(A,(\rho,\sigma))$ is of type~II.a), then we set $A_1\coloneqq A'$ and $A'_1\coloneqq \frac 1m \st_{\rho_1,\sigma_1}(P)$, where $(\rho_1,\sigma_1)\coloneqq \Pred_P(\rho,\sigma)$ (which is well defined by \cite{GGV1}*{Proposition~4.6(5)}). As before, in this case we also call $\mathcal{A}_1$ the corner generated by $(\mathcal{A},\mathcal{A}')$ and we say that $(\mathcal{A}_1,\mathcal{A}'_1)$ is a child of $(A,A')$.

\smallskip

For a general valid edge $(\mathcal{A},\mathcal{A}')$ we will construct all its possible children $(\mathcal{A}_1,\mathcal{A}_1')$ (see Definition~\ref{child}) in two steps:

\begin{itemize}[itemsep=0.5ex,topsep=0.5ex]

\item[-] {\bf GenerateCorners $(\mathcal{A},\mathcal{A}')$:} We find the corners $\mathcal{A}_1$ generated by a valid edge  $(\mathcal{A},\mathcal{A}')$ (see Definition~\ref{generated}).

\item[-] {\bf GetCornerChildren $((\mathcal{A},\mathcal{A}'),\mathcal{A}_1)$:} Given a corner $\mathcal{A}_1$ generated by a valid edge $(\mathcal{A},\mathcal{A}')$, we determine all possible $\mathcal{A}'_1$, such that $(\mathcal{A}_1,\mathcal{A}'_1)$ is a child of $(\mathcal{A},\mathcal{A}')$.

\end{itemize}

\smallskip

In the rest of this subsection $(\mathcal{A},\mathcal{A}')$ denotes a valid edge.

\begin{definition}
We set $\gamma_{\max}\coloneqq \min\bigl(\frac{b-b'}{\gap(\rho,l)},b-1\bigr)$ and we define the {\em set of multiplicities}
$$
\Gamma=\Gamma(\mathcal{A},\mathcal{A}')\coloneqq \begin{cases}\{\gamma_{\max}\}& \text{if $(\mathcal{A},\mathcal{A}')$ is simple}\\ \{b',\dots,\gamma_{\max}\}&\text{if $(\mathcal{A},\mathcal{A}')$ is not simple}.
\end{cases}
$$
\end{definition}

\begin{remark}\label{simple} Note that from the equality
$$
\gamma_{\max}=\min\bigl(\gcd(a-a',b-b'),b-1\bigr)
$$
(see \cite{GGV2}*{equality~(3.9)}) it follows that $\gamma_{\max}\in \mathds{N}$. Moreover if $\gamma_{\max} < \frac{b-b'}{\gap(\rho,l)}$, then $\gap(\rho,l) = 1$ and $b'= 0$, which, as we saw in Remark~\ref{comentarios}, excludes the case $(\mathcal{A},\mathcal{A}')$ simple.
\end{remark}

\begin{remark}
The previous definition is motivated by the properties established in Proposition~\ref{multiplicidad}(3) for the case of $(m,n)$-pairs.
\end{remark}

For each $\gamma$ such that  $b'\le \gamma \le \gamma_{\max}$, we let $\mathcal{A_{(\gamma)}}$ denote $(a_1\wrs l_1,b_1\bigr)$, where
$$
l_1\coloneqq \lcm(l,\rho),\quad b_1 \coloneqq \gamma\quad\text{and}\quad a_1\coloneqq \frac{al_1}{l} + (\gamma-b)\frac{-\sigma l_1}{\rho}.
$$
Note that $v_{\rho,\sigma}(A_{(\gamma)}) = v_{\rho,\sigma}(A)$. So $A_{(\gamma)}$ is in the line determined by $A$ and~$A'$.

\begin{definition}\label{admisible}
We say that $\mathcal{A_{(\gamma)}}$ is {\em admissible} if

\begin{enumerate}[itemsep=0.5ex,topsep=0.5ex]

\item $v_{1,-1}(A_{(\gamma)})<0$,

\item $l_1-\frac{a_1}{b_1}>1$ or $\gcd(a_1,b_1)>1$.

\end{enumerate}
\end{definition}

\begin{definition}\label{generated}
Let $\mathcal{A},\mathcal{A}'\in \mathds{N}_{\!(l)}\times \mathds{N}_0$ be such that $(\mathcal{A},\mathcal{A}')$ is a valid edge. We say that an element $\mathcal{A}_1\in \mathds{N}_{\!(l_1)}\times \mathds{N}$ is a {\em corner generated} by $(\mathcal{A},\mathcal{A}')$, if either $\mathcal{A}_1=\mathcal{A}'$ and $v_{1,-1}(A')<0$, or $v_{1,-1}(A')>0$ and there exists $\gamma\in \Gamma(\mathcal{A},\mathcal{A}')$ such that $\mathcal{A}_{(\gamma)}$ is admissible and $\mathcal{A}_1=\mathcal{A}_{(\gamma)}$ (which implies $\mathcal{A}_1\ne \mathcal{A}'$).
\end{definition}

\begin{proposition} Assume that $(\mathcal{A},\mathcal{A}')$ is simple. Let
$$
l_1\coloneqq \lcm(l,\rho),\quad a_1\coloneqq \frac{al_1}{l} + (\gamma_{\max}-b)\frac{-\sigma l_1}{\rho}\quad\text{and}\quad b_1\coloneqq \gamma_{\max}.
$$
If $v_{1,-1}(A')<0$, then $v_{1,-1}(A_1)>0$, where $A_1 \coloneqq \bigl(\frac{a_1}{l_1},b_1\bigr)$.
\end{proposition}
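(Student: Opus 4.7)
The plan is to translate the condition $v_{1,-1}(A_1)>0$ into an explicit numerical inequality in $b$, $b'$ and $k:=\gap(\rho,l)$, and then to verify it from the hypothesis $v_{1,-1}(A')<0$.

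First, since $b_1=\gamma_{\max}$ and the coordinate $a_1$ is chosen so that $v_{\rho,\sigma}(A_1)=v_{\rho,\sigma}(A)$, both $A$ and $A_1$ lie on the line $\{\rho x+\sigma y=v\}$, where $v:=v_{\rho,\sigma}(A)$. This line meets the diagonal $y=x$ at the point $\bigl(v/(\rho+\sigma),v/(\rho+\sigma)\bigr)$, which is well defined because $(\rho,\sigma)\in I$ forces $\rho+\sigma>0$. Since $A_1$ has height $\gamma_{\max}$, the inequality $v_{1,-1}(A_1)>0$ is therefore equivalent to $\gamma_{\max}<v/(\rho+\sigma)$.

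Next, I would compute both sides of this inequality. For the right-hand side, condition~(3) of Definition~\ref{valid edges} gives $\enF=(\mu/d)\mathcal{A}$ and $v_{\rho,\sigma}(\enF)=\rho+\sigma$, so $v=d(\rho+\sigma)/\mu$, hence $v/(\rho+\sigma)=d/\mu$; and by Remark~\ref{zazaza}, the simple hypothesis $v_{01}(\enF)-1=\gap(\rho,l)$ forces $\mu\ov b=k+1$, which, combined with $\ov b=b/d$, yields $d/\mu=b/(k+1)$. For the left-hand side, Remark~\ref{simple} rules out the degenerate case $\gap(\rho,l)=1$ and $b'=0$ whenever $(\mathcal{A},\mathcal{A}')$ is simple, so $\gamma_{\max}=(b-b')/k$. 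A short algebraic manipulation shows that $\gamma_{\max}<b/(k+1)$ is then equivalent to $b<(k+1)b'$.

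Finally, I would derive $b<(k+1)b'$ from $v_{1,-1}(A')<0$: the hypothesis reads $a'/l<b'$, and multiplying by $\rho>0$ and adding $\sigma b'$ to both sides yields $v_{\rho,\sigma}(A')=\rho(a'/l)+\sigma b'<(\rho+\sigma)b'$. Since $v_{\rho,\sigma}(A')=v$, dividing by $\rho+\sigma$ produces $b/(k+1)=d/\mu<b'$, equivalent to $b<(k+1)b'$, which closes the argument. The main subtlety I expect is the simultaneous invocation of the simple hypothesis in two different ways---via Remark~\ref{zazaza} to pin down $\mu\ov b=k+1$, and via Remark~\ref{simple} to identify $\gamma_{\max}=(b-b')/k$---since this is what makes both sides of the target inequality reduce to the same explicit rational function of $b$, $b'$, and $k$.
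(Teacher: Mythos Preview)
Your proof is correct and follows essentially the same line as the paper's: reduce the sign of $v_{1,-1}(A_1)$ to a comparison between $\gamma_{\max}$ and $d/\mu$, use the simple hypothesis (via Remarks~\ref{zazaza} and~\ref{simple}) to express both explicitly in terms of $b$, $b'$ and $k=\gap(\rho,l)$, and then extract the needed inequality between $b'$ and $d/\mu$ from the sign of $v_{1,-1}(A')$. One point worth flagging: the paper's own argument actually assumes $v_{1,-1}(A')>0$ and concludes $v_{1,-1}(A_1)<0$---the reversed implication, which is precisely what Algorithm~\ref{GetGeneratedCorners} needs in the simple branch---so the proposition as stated appears to have both inequalities flipped; your argument is the exact mirror of the paper's and is valid for the statement as written.
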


\begin{proof} By Definition~\ref{valid edges} and Remark~\ref{simple} we know that
\begin{equation}\label{eq7}
f_2=\gap(\rho,l)+1\qquad\text{and}\qquad \gmax=\frac{b-b'}{\gap(\rho,l)}.
\end{equation}
Let $\mu$ and $d$ be as in Definition~\ref{valid edges}. By Definition~\ref{valid edges} and item~(3') of Remark~\ref{zazaza} we have
\begin{equation}\label{eq6}
f_2 = \frac{\mu}{d} b\qquad\text{and}\qquad \frac{\mu}{d} = \frac{(\rho+\sigma)l}{\rho a + \sigma b l}.
\end{equation}
Moreover combining $v_{\rho,\sigma}(A)={v_{\rho,\sigma}(A')}$ with the fact that $v_{1,-1}(A')>0$, we  obtain
$$
b' < \frac {a'}{l}=-b'\frac{\sigma}{\rho}+\frac{a}l+b\frac{\sigma}{\rho}.
$$
Hence
$$
b'\left(\frac{\rho+\sigma}{\rho}\right)<\frac{\rho a+\sigma l b}{l\rho},
$$
which, by the second equality in~\eqref{eq6}, implies
$$
b'<\frac{\rho a+\sigma l b}{l(\rho+\sigma)}=\frac{d}{\mu}.
$$
But then, by the first equalities in~\eqref{eq7} and~\eqref{eq6},
$$
b=\frac{d}{\mu} f_2=\frac{d}{\mu}(\gap(\rho,l)+1)>\frac{d}{\mu} \gap(\rho,l) + b',
$$
and so, by the second equality in~\eqref{eq7},
$$
\gmax = \frac{b-b'}{\gap(\rho,l)}>\frac{d}{\mu}.
$$
Consequently,
$$
v_{1,-1}(A_1)=\frac{a\rho+b\sigma l}{\rho l}-\gmax \frac{\rho+\sigma}{\rho} < \frac{a\rho+b\sigma l}{\rho l}- \frac{d}{\mu}\frac{\rho+\sigma}{\rho}=0,
$$
where the last equality follows from the second equality in~\eqref{eq6}.
\end{proof}

In Algorithm~\ref{GetGeneratedCorners} we obtain a list $\GeneratedCorners$ consisting of all the corners generated by a valid edge $(\mathcal{A},\mathcal{A}')$.

\begin{center}
\begin{algorithm}[H]
\DontPrintSemicolon
    \KwIn{A valid edge $(\mathcal{A},\mathcal{A}') = ((a\wrs l,b),(a'\wrs l, b'))$.}
	\KwOut{A list $\GeneratedCorners$, consisting of all generated corners by $(\mathcal{A},\mathcal{A}')$.}		
    $(\rho,\sigma)\gets \dir(A-A')$\;
	\eIf {$v_{1,-1}(A') < 0$}{add $\mathcal{A}'$ to $\GeneratedCorners$\;}{
        $l_1\gets \lcm(\rho,l)$\;
        $\gap\gets \frac{\rho}{\gcd(\rho,l)}$\;	
        $\gmax\gets \min\left\{\frac{b-b'}{\gap},b-1\right\}$\;
        \eIf {$\Simple(\mathcal{A},\mathcal{A}')=\TRUE$}{
                $a_1\gets \frac{al_1}{l} + (\gmax-b)\frac{-\sigma l_1}{\rho}$\;
				$\mathcal{A}_1\gets (a_1\wrs  l_1,\gmax)$\;
                \If {$l_1-a_1/b_1>1$ \normalfont\textbf{ or }
                         $\gcd(a_1,b_1)>1$}{add $\mathcal{A}_1$ to $\GeneratedCorners$\;}}
                {\For {$b_1\gets b'+1$ \KwTo $\gmax$}{
                    $a_1\gets \frac{al_1}{l} + (b_1-b)\frac{-\sigma l_1}{\rho}$\;
					$\mathcal{A}_1 \gets (a_1\wrs l_1,b_1)$\;
					\If { $v_{1,-1}(A_1)<0$\normalfont\textbf{ and } ($l_1-a_1/b_1>1$ \normalfont\textbf{ or }
                         $\gcd(a_1,b_1)>1$)}{add $\mathcal{A}_1$ to $\GeneratedCorners$\;}
					}
				}
		}
    {\bf RETURN} $\GeneratedCorners$
     \caption{GetGeneratedCorners}
     \label{GetGeneratedCorners}
\end{algorithm}
\end{center}

\begin{remark}\label{bala}
Definitions~\ref{admisible} and~\ref{generated} are motivated by the following fact: Let $(P,Q)$ be an $(m,n)$-pair in $L^{(l)}$ and let $(A,(\rho,\sigma))$ be a regular corner of type~II.b) of $(P,Q)$. Let $\varphi$ be the automorphism of $L^{(l_1)}$ introduced in \cite{GGV1}*{Proposition 5.18}, where $l_1\coloneqq \gcd(l,\rho)$. Let $\lambda\in K^{\times}$ be as in Proposition~\ref{multiplicidad}(4) and set
$$
A'\coloneqq \frac{1}{m}\st_{\rho,\sigma}(P),\quad A_1\coloneqq \frac{1}{m}\st_{\rho,\sigma}(\varphi(P)),\quad (\rho_1,\sigma_1)\coloneqq \Pred_{\varphi(P)}(\rho,\sigma)\quad\text{and}\quad \gamma\coloneqq \frac{m_{\lambda}}{m}.
$$
Then,

\begin{enumerate}[itemsep=0.5ex,topsep=0.5ex]

\item by Proposition~\ref{multiplicidad}(2) the pair $(\mathcal{A},\mathcal{A}')$ is a valid edge,

\item since $(A,(\rho,\sigma))$ is of type~II.b), we have $v_{1,-1}(A')>0$,

\item by~\cite{GGV1}*{Proposition~5.18(4)} the corner $\mathcal{A}_1$ satisfies condition~(1) of Definition~\ref{admisible},

\item by items~(3) and~(4) of Proposition~\ref{multiplicidad}, and Remark~\ref{comentarios}, we have $b'<\gamma\le\gamma_{\max}$. 

\item by~\cite{GGV1}*{Proposition~5.18(3)} we have $\mathcal{A}_{(\gamma)} = \mathcal{A}_1$,

\item by~\cite{GGV1}*{Proposition~5.19} the corner $\mathcal{A}_1$ satisfies condition~(2) of Definition~\ref{admisible}.

\end{enumerate}
Thus $\mathcal{A}_1\in \mathds{N}_{\!(l_1)}\times \mathds{N}$ is a corner generated by $(\mathcal{A},\mathcal{A}')$, $\mathcal{A}_1\ne \mathcal{A}'$ and there exists $b'  < \gamma\le \gamma_{\max}$ such that $\mathcal{A}_1 = \mathcal{A}_{(\gamma)}$, which implies that $v_{01}(A')< v_{01}(A_1)< v_{01}(A)$.
\end{remark}

\begin{definition}\label{child}
Let $(\mathcal{A},\mathcal{A}')$ and $(\mathcal{A}_1,\mathcal{A}_1')$ be valid edges and let $(\rho,\sigma)\coloneqq \dir(A - A')$ and $(\rho_1,\sigma_1)\coloneqq \dir(A_1 - A'_1)$.  We say that $(\mathcal{A}_1,\mathcal{A}_1')$ is a {\em child} of $(\mathcal{A},\mathcal{A}')$ if $(\rho,\sigma)>(\rho_1,\sigma_1)$ in $I$ and $\mathcal{A}_1$ is a corner generated by $(\mathcal{A},\mathcal{A}')$.
\end{definition}

The previous definition describes the main inductive construction that yields complete chains, generalizing the case when the valid edges correspond to an $(m,n)$-pair. This construction consists of the two steps mentioned above that are realized through Algorithms~\ref{GetGeneratedCorners} and \ref{GetCornerChildrenList}.

\smallskip

\begin{remark} Let $(\mathcal{A},\mathcal{A}')$ be a valid edge, let $(\rho,\sigma)\coloneqq \dir(A-A')$ and let $\mathcal{A}_1 = (a_1\wrs l_1,b_1)$ be a corned generated by $(\mathcal{A},\mathcal{A}')$. By Definition~\ref{generated} we know that $v_{1,-1}(A_1)<0$. In Algorithm~\ref{GetCornerChildrenList} we obtain all the children of $(\mathcal{A},\mathcal{A}')$ of the form $(\mathcal{A}_1,\mathcal{A}'_1)$. The lower bound $\lo$ in the algorithm comes from the fact that $(\rho_1,\sigma_1)<(\rho,\sigma)$ if and only if $\mu> \frac{d_1(\rho+\sigma)}{v_{\rho,\sigma}(A_1)}$, where $d_1\coloneqq \gcd(a_1,b_1)$. The upper bound $\hi$ in lines~4 and~6 and the conditions required in line~11 come from Definition~\ref{valid edges}. By~\cite{GGV2}*{Re\-mark~3.9} we know that
$$
A_1'=\Bigl(\frac{a_1}{l_1},b_1\Bigr)+j\Bigl(\gap(\rho_1,l_1) \frac{\sigma_1}{\rho_1},-\gap(\rho_1,l_1)\Bigr)\qquad\text{for some $0<j\le \Big\lfloor \frac{b_1}{\gap(\rho_1,l_1)}\Big\rfloor$.}
$$
\end{remark}

\begin{remark}
Before running Algorithm~\ref{GetCornerChildrenList} with input a corner $\mathcal{A}_1=(a_1\wrs l_1,b_1)$ such that $l_1-\frac{a_1}{b_1}\le 1$, and a valid edge$(\mathcal{A},\mathcal{A}')$, it is necessary to run Algorithm~\ref{PLLC} with input greater than or equal to $a_1$.
\end{remark}

\begin{center}
\begin{algorithm}[H]
\DontPrintSemicolon
  \KwIn{A valid edge $(\mathcal{A},\mathcal{A}')$ and a corner $\mathcal{A}_1 = (a_1\wrs l_1,b_1)$ generated by $(\mathcal{A},\mathcal{A}')$ with $l_1-\frac{a_1}{b_1}\le 1$.}
  \KwOut{A list $\CornerChildrenList$, consisting of all $(\mathcal{A}_1,\mathcal{A}'_1)$ that are children of $(\mathcal{A},\mathcal{A}')$.}
    $(\rho,\sigma) \gets \dir(A - A')$\;
    $d_1\gets \gcd(a_1,b_1)$\;
    $\lo \gets  \left\lfloor 1 + \frac{d_1(\rho + \sigma)}{v_{\rho,\sigma}(A_1)} \right\rfloor$\;
    $\hi \gets d_1$\;
    \If{$l_1>1$}{$\hi \gets \left\lfloor l_1(b_1l_1 - a_1) + \frac{d_1}{b_1} \right\rfloor$}
    \For {$\mu\gets \lo$ \KwTo $\hi$}{
        $\enF \gets \frac{\mu}{d_1}\left(\frac{a_1}{l_1}, b_1\right)$\;
        $(\rho_1, \sigma_1) \gets \dir(\enF-(1,1))$ \;
        $\gap \gets\frac{\rho_1}{\gcd(\rho_1,l_1)}$\;
	   \If {$\gap \leq b_1$ \normalfont\textbf{and} $d_1 \nmid \mu$ } {
		      \For {$j \gets 1$ \KwTo $\big\lfloor \frac{b_1}{\gap}\big\rfloor$}{
        $A'_1 \gets \bigl(\frac{a_1}{l_1},b_1\bigr)+j\bigl(\gap \frac{\sigma_1}{\rho_1},-\gap\bigr)$\;
        \If{( $l_1>1$\normalfont\textbf{ and } $v_{1,-1}(A'_1)\ne 0$ )\normalfont\textbf{ or }($l_1=1$\normalfont\textbf{ and } $v_{1,-1}(A'_1)< 0$)\normalfont\textbf{ or }\\
                \hspace{10pt}($l_1=1$, $v_{1,-1}(A'_1)> 0$\normalfont\textbf{ and } $A'_1\in \PLLC$)}{
                add $(\mathcal{A}_1,\mathcal{A}'_1)$ to $\CornerChildrenList$}
            }
		  }
        }
    {\bf RETURN} $\CornerChildrenList$
    \caption{GetCornerChildrenList}
\label{GetCornerChildrenList}
\end{algorithm}
\end{center}

\end{spacing}

\begin{spacing}{1.06}

\begin{definition}\label{final corner}
A corner $\mathcal{A}=(a\wrs l,b)$ is called a {\em final corner} if $l-\frac{a}{b}>1$.
\end{definition}

In Algorithm~\ref{GetChildrenAndFinalList} we combine Algorithms~\ref{GetGeneratedCorners} and~\ref{GetCornerChildrenList} in order to obtain a procedure giving the children of a valid edge $(\mathcal{A},\mathcal{A}')$ and the final corners generated by $(\mathcal{A},\mathcal{A}')$.

\smallskip

In line~$1$ of Algorithm~\ref{GetChildrenAndFinalList} we use the expression ``$\GetGeneratedCorners(\mathcal{A},\mathcal{A}')$'' as a notation for ``run $\GetGeneratedCorners$ with input $(\mathcal{A},\mathcal{A}')$''. We use similar notations in the following algorithms.

\begin{center}
\begin{algorithm}[H]
  \DontPrintSemicolon
    \KwIn{A valid edge $(\mathcal{A},\mathcal{A}')$.}
	\KwOut{A list $\ChildrenList$, consisting of all children of $(\mathcal{A},\mathcal{A}')$.\\
            \hspace{1.45cm} A list $\FinalList$, consisting of all final corners generated by $(\mathcal{A},\mathcal{A}')$.}
    $\GeneratedCorners\gets \GetGeneratedCorners(\mathcal{A},\mathcal{A}')$\;
    \For{$\mathcal{A}_1=(a_1\wrs l_1,b_1)\in \GeneratedCorners$}{
      \If {$l_1-\frac{a_1}{b_1}>1$}{add $\mathcal{A}_1$ to $\FinalList$
				}$\CornerChildrenList\gets \GetCornerChildrenList((\mathcal{A},\mathcal{A}'),\mathcal{A}_1)$\;
        \For{ $(\mathcal{A}_1,\mathcal{A}'_1)\in \CornerChildrenList$ }{ add
            $(\mathcal{A}_1,\mathcal{A}'_1)$ to $\ChildrenList$
            }

		}
 {\bf RETURN} $(\ChildrenList,\FinalList)$
 \caption{GetChildrenAndFinalList}
\label{GetChildrenAndFinalList}
\end{algorithm}
\end{center}

\subsection{Main inductive step and complete chains}

Now we are able to construct recursively a chain $(\mathcal{C}_0,\dots,\mathcal{C}_j)$ of valid edges $\mathcal{C}_i\coloneqq (\mathcal{A}_i,\mathcal{A}'_i)$, where each $\mathcal{C}_i$ a child of the previous (except the first one). In the case of an standard $(m,n)$-pair $(P,Q)$, this process terminates when the generated corner
$$
\mathcal{A}_{j+1} = (a_{j+1}\wrs l_{j+1},b_{j+1})
$$
is a regular corner of type~I. In this case
$$
l_{j+1}-\frac{a_{j+1}}{b_{j+1}>1}.
$$

\begin{definition}\label{complete chain}
A chain $(\mathcal{C}_0,\dots,\mathcal{C}_j,\mathcal{A}_{j+1})$ is called a {\em complete chain of length~$j+1$}, if
\begin{itemize}[itemsep=0.5ex,topsep=0.5ex]

\item[-] $\mathcal{C}_i$ is a valid edge for $i=0,\dots,j$,

\item[-] $\mathcal{C}_{i+1}$ is a child of $\mathcal{C}_i$ for $i=0,\dots,j-1$,

\item[-] $\mathcal{A}_{j+1}$ is generated by $C_j$,

\item[-] $\mathcal{A}_{j+1}$ is a final corner,

\item[-] $l_0=1$,

\end{itemize}
where $\mathcal{C}_i=(\mathcal{A}_i,\mathcal{A}'_i)$ and $\mathcal{A}_i=(a_i\wrs l_i,b_i)$.
\end{definition}

In Algorithm~\ref{GetCompleteChains} we give a method for the generation of a list $\CompleteChains$ consisting of all complete chains starting with a valid edge
$$
\mathcal{C}_0 = (\mathcal{A},\mathcal{A}') = ((a,b),(a',b'))
$$
and having length less than or equal to $\NumberOfFactors\bigl(\gcd(b,(b-b')/\rho)\bigr)+1$, where $(\rho,\sigma)$ denotes $\dir(A-A')$ and $\NumberOfFactors \bigl(n)$ is an auxiliary function which returns the number of prime factors of $n$, counted with its multiplicity.

We use auxiliary lists $\OpenChains$ and $\POpenChains$ and an auxiliary variable $\Lmax$. Moreover the expression $\mathscr{C}\uplus \mathcal{A}_1$ denotes the chain obtained adding $\mathcal{A}_1$ at the end of the chain $\mathscr{C}$ and similarly for $\mathscr{C}\uplus (\mathcal{A}_1,\mathcal{A}'_1)$.

\end{spacing}

\begin{spacing}{1}

\begin{center}
\begin{algorithm}[H]
  \DontPrintSemicolon
    \KwIn{A valid edge $\mathcal{C}_0=(\mathcal{A},\mathcal{A}') = ((a,b),(a',b'))$.}
	\KwOut{A list $\CompleteChains$, consisting of all complete chains $\mathcal{CH}$ starting in
    $\mathcal{C}_0$, with $\length(\mathcal{CH})\le \NumberOfFactors\Bigl(\gcd\Bigl(b,\frac{b-b'}{\rho}\Bigr)\Bigr)+1$, where $(\rho,\sigma)\coloneqq \dir(A-A')$.}
    $(\rho, \sigma) \gets \dir(A-A')$\;
    $\Lmax \gets \NumberOfFactors\left(\gcd\Bigl(b,\frac{b-b'}{\rho}\Bigr)\right)+1$\;
    $\OpenChains \gets (\mathcal{C}_0)$\;
    $j\gets 0$\;
    \While{$j < \Lmax $}{
    $\POpenChains \gets \emptyset$\;
    \For{$\mathcal{CH}\in \OpenChains$}{
    $\Last \gets $ \rm{Last element in} $\mathcal{CH}$\;
    $(\ChildrenList,\FinalList)\gets \GetChildrenAndFinalList(\Last)$\;
    \For{$\mathcal{A}_1\in \FinalList$}{
    \bf{add} $\mathcal{CH}\uplus \mathcal{A}_1$ \bf{to} $\CompleteChains$\;
         }
    \For{$(\mathcal{A}_1,\mathcal{A}'_1)\in \ChildrenList$}{
    \bf{add} $\mathcal{CH}\uplus (\mathcal{A}_1,\mathcal{A}'_1)$ \bf{to} $\POpenChains$\;
         }
    }
    $\OpenChains \gets \POpenChains$\;
    $j\gets j+1$}
 {\bf RETURN}  $\CompleteChains$
 \caption{GetCompleteChains}
\label{GetCompleteChains}
\end{algorithm}
\end{center}

\begin{theorem}\label{standard pair generates complete chain}
For each standard $(m,n)$-pair $(P,Q)$, there exist
$$
\bigl((P_i,Q_i),(A_i,A_i'),(\rho_i,\sigma_i),l_i\bigr)_{0\le i\le j}\quad\text{and}\quad \bigl((P_{j+1},Q_{j+1}),A_{j+1},(\rho_{j+1},\sigma_{j+1}),l_{j+1}\bigr)),
$$
where $j\in \mathds{N}$, such that:

\begin{enumerate}[itemsep=0.5675ex,topsep=0.5675ex]

\item $l_0\le \dots\le l_{j+1}\in \mathds{N}$ with $l_0 = 1$,

\item $(\rho_0,\sigma_0)>\dots > (\rho_{j+1},\sigma_{j+1})$ in $I$,

\item $(P_i,Q_i)$ is an $(m,n)$-pair in $L^{(l_i)}$ for each $1\le i\le j+1$ and $(P_0,Q_0) = (P,Q)$,

\item $\ell_{\rho_h,\sigma_h}(P_i) = \ell_{\rho_h,\sigma_h}(P_{i+1})$ for $0\le h<i\le j$,

\item $(A_h,(\rho_h,\sigma_h))$ is a regular corner of type~II.a) of $(P_i,Q_i)$ for $0\le h < i\le j+1$. Moreover
$$
\qquad\quad \frac 1m \st_{\rho_h,\sigma_h}(P_i) = A_{h+1}.
$$

\item $A_0 = \frac{1}{m}\en_{10}(P)$ and $(A_i,(\rho_i,\sigma_i))$ is a regular corner of type~II of $(P_i,Q_i)$ for $0\le i\le j$,

\item if $(A_i,(\rho_i,\sigma_i))$ is a regular corner of type~II.a) of $(P_i,Q_i)$, then
$$
\quad\qquad l_{i+1} = l_i,\quad (P_{i+1},Q_{i+1}) = (P_i,Q_i)\quad \text{and}\quad A_{i+1}=A'_i=\frac{1}{m}\st_{\rho_i,\sigma_i}(P_i),
$$

\item if $(A_i,(\rho_i,\sigma_i))$ is a regular corner of type~II.b) of $(P_i,Q_i)$, then $l_{i+1} = \lcm(\rho_i,l_i)$ and there exists a root $\lambda\in K^{\times}$ of the polynomial $\mathfrak{p}_i(z)$, defined by
    $$
    \ell_{\rho_i,\sigma_i}(P_i) = x^{\frac{k_i}{l_i}} \mathfrak{p}_i(z),\quad\text{where $z\coloneqq x^{-\sigma_i/\rho_i}y$,}
    $$
    such that $m\mid m_{\lambda}$, where $m_{\lambda}$ is the multiplicity of $z-\lambda$ in $\mathfrak{p}_i(z)$ and
    \begin{equation}\label{eq2}
    \frac{1}{m}\st_{\rho_i,\sigma_i}(P_{i+1}) = A_{i+1} = \Bigl(\frac{k_i}{m l_i},0\Bigr) + \frac{m_{\lambda}}{m}\Bigl(-\frac{\sigma_i}{\rho_i},1\Bigr)\ne A'_i=\frac{1}{m}\st_{\rho_i,\sigma_i}(P_i).
    \end{equation}
    Moreover $\ell_{\rho_i,\sigma_i}(P_{i+1}) = \varphi(\ell_{\rho_i,\sigma_i}(P_i))$, where $\varphi\in \Aut(L^{(l_{i+1})})$ is defined by
    $$
    \qquad\quad \varphi(x^{\frac{1}{l_{i+1}}})\coloneqq x^{\frac{1}{l_{i+1}}}\qquad\text{and}\qquad \varphi(y)\coloneqq y + \lambda x^{\frac{\sigma_i}{\rho_i}},
    $$

\item $(A_{j+1},(\rho_{j+1},\sigma_{j+1}))$ is a regular corner of type~I of $(P_{j+1},Q_{j+1})$ in $L^{(l_{j+1})}$,

\item $(\mathcal{A}_{i+1},\mathcal{A}_{i+1}')$ is a child of $(\mathcal{A}_i,\mathcal{A}_i')$ for $0\le i<j$,

\item $v_{01}(A_{i+1})<v_{01}(A_i)$ for $0\le i\le j$,

\item the chain
\begin{equation}\label{cadena completa}
\bigl((\mathcal{A}_0,\mathcal{A}_0'),\dots, (\mathcal{A}_j,\mathcal{A}_j'),\mathcal{A}_{j+1}\bigr),
\end{equation}
is complete,

\item if $t$ is the greatest index such that $l_t = 1$, then
\begin{itemize}[itemsep=0.565ex,topsep=0.565ex]
\item[-] $\bigl\{(A_i,(\rho_i,\sigma_i)): 0\le i\le t\bigr\}$ is the set of regular corners of $(P,Q)$,
\item[-] $(A_i,(\rho_i,\sigma_i))$ is a regular corner of type~IIa) of $(P,Q)$ for $0\le i<t$ and $(A_t,(\rho_t,\sigma_t))$ is a regular corner of type~IIb) of $(P,Q)$,
\item[-] $A'_t$ is the last lower corner of $(P,Q)$ (see \cite{GGV2}*{Definition~3.21}),
\item[-] $(P_i,Q_i) = (P,Q)$ for all $i\le t$,
\end{itemize}

\item The set of regular corners of $(P_{j+1},Q_{j+1})$ is $\{(A_i,(\rho_i,\sigma_i)):0\le i\le j+1\}$.

\end{enumerate}

\end{theorem}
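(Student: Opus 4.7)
The plan is an inductive construction on $i$. For the base case, I would set $l_0 \coloneqq 1$, $(P_0,Q_0) \coloneqq (P,Q)$, $A_0 \coloneqq \tfrac{1}{m}\en_{10}(P)$, and take $(\rho_0,\sigma_0)$ to be the direction of the first regular corner of $(P,Q)$; by \cite{GGV1}*{Propositions~5.22 and~6.1}, this pair is of type~II, establishing item~(6) at step~$0$.

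For the inductive step, assume the data through index~$i$ have been constructed with $(A_i,(\rho_i,\sigma_i))$ a regular corner of type~II of $(P_i,Q_i)$. I would split into the two subcases. In type~II.a), \cite{GGV1}*{Proposition~4.6(5)} supplies a well-defined predecessor $(\rho_{i+1},\sigma_{i+1})$, and I set $l_{i+1}=l_i$, $(P_{i+1},Q_{i+1})=(P_i,Q_i)$, $A_{i+1}=A'_i$, so that item~(7) holds directly. In type~II.b), Proposition~\ref{multiplicidad}(4) supplies a root $\lambda\in K^{\times}$ of $p$ whose multiplicity satisfies $m\mid m_{\lambda}$ (forced because \eqref{eq2} places $A_{i+1}$ on the lattice $\tfrac{1}{m}\mathds{N}_0\times\tfrac{1}{m}\mathds{N}_0$), and \cite{GGV1}*{Proposition~5.18} both defines the automorphism~$\varphi$ and produces the new pair $(P_{i+1},Q_{i+1})=(\varphi(P_i),\varphi(Q_i))\in L^{(l_{i+1})}$ with $l_{i+1}=\lcm(\rho_i,l_i)$, yielding item~(8). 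The process terminates at the first index $j+1$ for which the generated corner is of type~I; termination is forced by the strict decrease $v_{01}(A_{i+1})<v_{01}(A_i)$ of item~(11).

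With the sequences in hand, items~(1), (3), (7), (8), (11) are immediate from the construction. Item~(2) combines \cite{GGV1}*{Proposition~4.6} with the direction estimate in \cite{GGV1}*{Proposition~5.18}; items~(4)--(5) are an induction relying on the invariance of leading forms in \cite{GGV1}*{Proposition~5.18(2)}; item~(9) is the termination condition; and items~(13)--(14) follow by tracking the index $t$ at which $l_i$ first exceeds~$1$ (so $(P_i,Q_i)=(P,Q)$ for $i\le t$), and then invoking \cite{GGV1}*{Definition~6.2} together with \cite{GGV2}*{Definition~3.21} to identify $A'_t$ as the last lower corner.

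The main obstacle will be item~(12): verifying that the assembled chain is \emph{complete} in the sense of Definition~\ref{complete chain}. For this I would invoke Proposition~\ref{multiplicidad}(2) to certify that each $\mathcal{C}_i=(\mathcal{A}_i,\mathcal{A}'_i)$ is a valid edge, and Remark~\ref{bala} together with items~(2) and~(11) to certify that $\mathcal{C}_{i+1}$ is a child of $\mathcal{C}_i$ in the sense of Definition~\ref{child}, distinguishing the type~II.a) and type~II.b) branches. The terminal corner $\mathcal{A}_{j+1}$ is generated by $\mathcal{C}_j$ either as $\mathcal{A}'_j$ (in the type~II.a) branch ending at a type~I corner) or via the $\gamma$-construction of Definition~\ref{generated} (in the type~II.b) branch, using Remark~\ref{bala} items~(3)--(6)), and it satisfies $l_{j+1}-\tfrac{a_{j+1}}{b_{j+1}}>1$, i.e.\ is a final corner in the sense of Definition~\ref{final corner}, because this inequality is equivalent to the type~I condition of \cite{GGV1}*{Definition~5.5} at termination. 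The condition $l_0=1$ is automatic from the base case.
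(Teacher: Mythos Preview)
Your overall inductive scheme matches the paper's, but there is one genuine gap: you do not account for regular corners of type~III. After applying \cite{GGV1}*{Proposition~5.18} in the type~II.b) branch, the regular corner
\[
(A_{i+1},(\rho_{i+1},\sigma_{i+1}))\coloneqq \Bigl(\tfrac{1}{m}\st_{\rho_i,\sigma_i}(P_{i+1}),\ \Pred_{P_{i+1}}(\rho_i,\sigma_i)\Bigr)
\]
of the new pair $(P_{i+1},Q_{i+1})$ need not be of type~I or~II: it may be of type~III. Your dichotomy ``either type~II (continue inductively) or type~I (terminate)'' is therefore incomplete, and your termination argument via the strict decrease of $v_{01}$ does not apply to a type~III step, since in that case the point $A_{i+1}$ does not change.

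The paper handles this with a sub-iteration: whenever type~III occurs one invokes \cite{GGV1}*{Proposition~5.17}, which (since then $\rho_{i+1}\mid l_{i+1}$) produces a further $(m,n)$-pair in the \emph{same} $L^{(l_{i+1})}$ with the same point $A_{i+1}$ but a strictly smaller direction $(\rho_{i+1,1},\sigma_{i+1,1})<(\rho_{i+1},\sigma_{i+1})$. Termination of this sub-iteration is \emph{not} by descent on $v_{01}$ but by a finiteness argument: each type~III direction satisfies $\rho_{i+1,u}\mid l_{i+1}$, and since $(1,-1)<(\rho_{i+1,u},\sigma_{i+1,u})<(1,0)$ forces $0<-\sigma_{i+1,u}<\rho_{i+1,u}$, only finitely many such directions exist. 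Hence type~I or~II eventually occurs and one may resume the main induction. Without this argument your construction may not be well defined, and in particular the claims in items~(6) and~(9)--(12) do not follow.

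A minor point: the fact that $\mathcal{A}_{j+1}$ is a final corner (that is, $l_{j+1}-a_{j+1}/b_{j+1}>1$) is supplied by \cite{GGV1}*{Proposition~5.13}, not directly by \cite{GGV1}*{Definition~5.5}.
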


\begin{proof}
Take the set
$$
\{(A_0,(\rho_0,\sigma_0)),\dots,(A_t,(\rho_t,\sigma_t))\},
$$
of regular corners of $(P,Q)$, with $(\rho_i,\sigma_i)>(\rho_{i+1},\sigma_{i+1})$ for all $i$ (note that we are using the opposed enumeration of~\cite{GGV1}*{Theorem~7.6}). By \cite{GGV1}*{Remark~5.12} we know that $A_0=\frac{1}{m}\en_{10}(P)$. Setting $A_i'\coloneqq \frac 1m \st_{\rho_i,\sigma_i}(P)$, we obtain a chain
$$
((A_0,A_0'),\dots,(A_t,A_t')),
$$
where $A_i,A_i'\in \mathds{N}\times\mathds{N}_0$ by \cite{GGV1}*{Remark~5.8}. By \cite{GGV1}*{Theorem~7.6(1)},
$$
\{(\rho_0,\sigma_0),\dots,(\rho_{t-1},\sigma_{t-1})\} = A(P)
$$
and the $3$-uple $(A_t,A'_t, (\rho_t,\sigma_t))$ is the starting triple of $(P,Q)$. Hence, by \cite{GGV1}*{Remark~5.10} we know that $(A_i,(\rho_i,\sigma_i))$ is a regular corner of type~II.a) of $(P,Q)$ for $0\le i<t$. Therefore $v_{1,-1}(A'_i)<0$ for $0\le i<t$. Furthermore, by items~(1) and~(2) of Proposition~\ref{multiplicidad} each one of the pairs $(\mathcal{A}_i,\mathcal{A}'_i)$, with $0\le i\le t$, is a valid edge. Moreover,
$$
A_{i+1} = A'_i\quad\text{and}\quad v_{01}(A_{i+1})<v_{01}(A_i)\qquad\text{for $0\le i<t$.}
$$
Consequently $\mathcal{A}_{i+1}$ is a corner generated by $(\mathcal{A}_i,\mathcal{A}'_i)$ for $0\le i <t$. Therefore $(\mathcal{A}_{i+1},\mathcal{A}'_{i+1})$ is a child of $(\mathcal{A}_i,\mathcal{A}'_i)$ for $0\le i <t$. Moreover, $A_t'$ is the last lower corner of $(P,Q)$. For $i\le t$, set $l_i\coloneqq 1$ and $(P_i,Q_i)\coloneqq (P,Q)$. By \cite{GGV1}*{Remark~6.3} we know that $(A_t,(\rho_t,\sigma_t))$ is a regular corner of type~II.b), and so $v_{1,-1}(\st_{\rho_t,\sigma_t}(P))>0$. This implies that $(\rho_t,\sigma_t)\ne (1,0)$, because $(P,Q)$ is standard (see \cite{GGV1}*{Definition~4.3}). Since $(\rho_t,\sigma_t)\in I$ we obtain that $\rho_t>0$. Let $\lambda\in K^{\times}$ be as in Proposition~\ref{multiplicidad}(4) and let \hbox{$l_{t+1}\coloneqq \rho_t$}. Applying \cite{GGV1}*{Proposition~5.18 and Remark~3.9} to $(P_t,Q_t)$ and $(A_t,(\rho_t,\sigma_t))$, we obtain an $(m,n)$-pair $(P_{t+1},Q_{t+1})$ in $L^{(l_{t+1})}$, such that

\begin{itemize}[itemsep=0.5675ex,topsep=0.5675ex]

\item[-] $\en_{\rho_t,\sigma_t}(P_{t+1}) = \en_{\rho_t,\sigma_t}(P_t)$ and $\ell_{\rho_h,\sigma_h}(P_{t+1}) = \ell_{\rho_h,\sigma_h}(P_t)$ for $0\le h < t$,

\item[-] $(A_{t+1},(\rho_{t+1},\sigma_{t+1}))$ is a regular corner of $(P_{t+1},Q_{t+1})$, where
$$
\qquad\quad (\rho_{t+1},\sigma_{t+1})\coloneqq \Pred_{P_{t+1}}(\rho_t,\sigma_t)\quad\text{and}\quad A_{t+1}\coloneqq \frac{1}{m}\st_{\rho_t,\sigma_t}(P_{t+1}),
$$

\item[-] There exists $\lambda\in K^{\times}$ such that $m$ divides the multiplicity $m_{\lambda}$ of $z-\lambda$ in $\mathfrak{p}_t(z)$ and
$$
\qquad\quad A_{t+1} = \Bigl(\frac{k_t}{m l_t},0\Bigr) + \frac{m_{\lambda}}{m} \Bigl(-\frac{\sigma_t}{\rho_t}, 1\Bigr),
$$
Moreover $\ell_{\rho_t,\sigma_t}(P_{t+1}) = \varphi(\ell_{\rho_t,\sigma_t}(P_t))$, where $\varphi\in \Aut(L^{(l_{t+1})})$ is defined by
$$
\qquad\quad \varphi(x^{\frac{1}{l_{t+1}}})\coloneqq x^{\frac{1}{l_{t+1}}}\qquad\text{and}\qquad \varphi(y)\coloneqq y+\lambda x^{\frac{\sigma_t}{\rho_t}},
$$

\item[-] $A(P_{t+1}) = A(P_t)\cup \{(\rho_t,\sigma_t)\}\cup \{(\rho,\sigma)\in A(P_{t+1}):(\rho,\sigma)<(\rho_t,\sigma_t)\text{ in } I\}$, where $A(P_t)$ and $A(P_{t+1})$ are as in the discussion above \cite{GGV1}*{Proposition~5.2}.

\end{itemize}
By Remark~\ref{bala} we know that $\mathcal{A}_{t+1}$ is a corner generated by $(\mathcal{A}_t,\mathcal{A}'_t)$, that $\mathcal{A}_{t+1}\ne \mathcal{A}'_t$ and that $v_{01}(A_{t+1})<v_{01}(A_t)$. We claim that we can assume that $(A_{t+1},(\rho_{t+1},\sigma_{t+1}))$ is of type~I or~II. In fact, suppose that it is a regular corner of type~III and write
$$
\ell_{\rho_{t+1},\sigma_{t+1}}(P_{t+1}) = x^{\frac{\kappa_{t+1}}{l_{t+1}}}\mu_0 (z-\lambda_0)^{r_0}\quad\text{where $z\coloneqq x^{\frac{-\sigma_{t+1}}{\rho_{t+1}}}y$, $\mu_0,\lambda_0\in K^{\times}$ and $r_0\in \mathds{N}$.}
$$
Then, by \cite{GGV1}*{Theorem~7.6(1) and Remark~5.10},
$$
A(P_{t+1}) = A(P_t)\cup \{(\rho_t,\sigma_t)\}
$$
while, by~\cite{GGV1}*{Proposition~5.17}, we have $\rho_{t+1}\mid l_{t+1}$ and there exists an $(m,n)$-pair $(P_{t+1,1},Q_{t+1,1})$ in $L^{(l_{t+1})}$ such that,

\begin{itemize}[itemsep=0.5675ex,topsep=0.5675ex]

\item[-] $\en_{\rho_{t+1},\sigma_{t+1}}(P_{t+1,1}) = \en_{\rho_{t+1},\sigma_{t+1}}(P_{t+1}) = A_{t+1} = \frac{1}{m}\st_{\rho_{t+1},\sigma_{t+1}}(P_{t+1,1})$,

\item[-]  $\ell_{\rho_h,\sigma_h}(P_{t+1,1}) = \ell_{\rho_h,\sigma_h}(P_{t+1})$ for $0\le h\le t$,

\item[-] $(A_{t+1},(\rho_{t+1,1},\sigma_{t+1,1}))$ is a regular corner of $(P_{t+1,1},Q_{t+1,1})$, where
$$
(\rho_{t+1,1},\sigma_{t+1,1})\coloneqq \Pred_{P_{t+1,1}} (\rho_{t+1},\sigma_{t+1}),
$$

\item[-] $A(P_{t+1,1}) = A(P_{t+1})\cup\{(\rho,\sigma)\in A(P_{t+1,1}):(\rho,\sigma)<(\rho_{t+1},\sigma_{t+1}) \text{ in } I\}$.

\end{itemize}
Note that $(\rho_{t+1,1},\sigma_{t+1,1}) = \Pred_{P_{t+1,1}} (\rho_t,\sigma_t)$. As long as Case~III occurs, we can find
$$
(\rho_{t+1,1},\sigma_{t+1,1})> \dots> (\rho_{t+1,u},\sigma_{t+1,u})>\dots,
$$
and $(m,n)$-pairs $(P_{t+1,u},Q_{t+1,u})$ in $L^{(l_{t+1})}$ such that for all $u\ge 1$

\begin{itemize}[itemsep=0.5675ex,topsep=0.5675ex]

\item[-] $\rho_{t+1,u}\mid l_{t+1}$ ,

\item[-] $\en_{\rho_{t+1,u},\sigma_{t+1,u}}(P_{t+1,u+1}) = \en_{\rho_{t+1,u},\sigma_{t+1,u}} (P_{t+1,u}) = A_{t+1} = \frac{1}{m}\st_{\rho_{t+1,u},\sigma_{t+1,u}}(P_{t+1,u+1})$,

\item[-] $(A_{t+1},(\rho_{t+1,u+1},\sigma_{t+1,u+1}))$ is a regular corner of $(P_{t+1,u+1},Q_{t+1,u+1})$, where
$$
(\rho_{t+1,u+1},\sigma_{t+1,u+1})\coloneqq \Pred_{P_{t+1,u+1}} (\rho_{t+1,u},\sigma_{t+1,u}) = \Pred_{P_{t+1,u+1}} (\rho_t,\sigma_t),
$$

\item[-] $\ell_{\rho_h,\sigma_h}(P_{t+1,u+1}) = \ell_{\rho_h,\sigma_h}(P_{t+1,u})$ for $0\le h\le t$,

\item[-] $A(P_{t+1,u+1}) =  A(P_{t+1}) \cup \{(\rho,\sigma)\in A(P_{t+1,u+1}):(\rho,\sigma)< (\rho_{t+1}, \sigma_{t+1}) \text{ in } I\}$.

\end{itemize}
But there are only finitely many $\rho_{t+1,u}$'s with $\rho_{t+1,u}\mid l_{t+1}$. Moreover,
$$
0<-\sigma_{t+1,u}<\rho_{t+1,u},
$$
since $(1,-1)<(\rho_{t+1,u},\sigma_{t+1,u})<(1,0)$, and so there are only finitely many $(\rho_{t+1,u},\sigma_{t+1,u})$ possible. Thus, eventually cases~I or~II must occur, proving the claim. Note that by \cite{GGV1}*{Theorem~7.6(1) and Remarks~5.10 and~5.11}
$$
(A_{t+1},(\rho_{t+1},\sigma_{t+1}))\text{ is of type~II.a)} \Leftrightarrow (\rho_{t+1},\sigma_{t+1})\in A(P_{t+1}) \Leftrightarrow A(P_t)\cup \{(\rho_t,\sigma_t)\}\subsetneq A(P_{t+1}).
$$
Assume that $(A_{t+1},(\rho_{t+1},\sigma_{t+1}))$ is a regular corner of type~II and set $A_{t+1}'\coloneqq \frac{1}{m}\st_{\rho_{t+1},\sigma_{t+1}}(P_{t+1})$. By Pro\-position~\ref{multiplicidad}(2) we know that $(\mathcal{A}_{t+1},\mathcal{A}_{t+1}')$ is a child of $(\mathcal{A}_t,\mathcal{A}_t')$. If $(A_{t+1},(\rho_{t+1},\sigma_{t+1}))$ is a regular corner of type~II.a), then by \cite{GGV1}*{Remark 5.11}, the pair
\begin{equation*}
\bigl(A_{t+2},(\rho_{t+2},\sigma_{t+2})\bigr)\coloneqq \bigl(A_{t+1}',\Pred_{P_{t+1}}(\rho_{t+1},\sigma_{t+1})\bigr)
\end{equation*}
is a regular corner of $(P_{t+2},Q_{t+2})\coloneqq (P_{t+1},Q_{t+1})$. Moreover, by definition $\mathcal{A}_{t+2}$ is generated by $(\mathcal{A}_{t+1},\mathcal{A}_{t+1}')$ and $v_{01}(A_{t+2})<v_{01}(A_{t+1})$. On the other hand, if $(A_{t+1},(\rho_{t+1},\sigma_{t+1}))$ is a cor\-ner of type~II.b), then, arguing as above we obtain a root $\lambda$ of $\mathfrak{p}_{t+1}(z)$ and an $(m,n)$-pair $(P_{t+2},Q_{t+2})$ in $L^{(l_{t+2})}$, where $l_{t+2}\coloneqq \lcm(l_{t+1},\rho_{t+1})$, such that

\begin{itemize}[itemsep=0.5675ex,topsep=0.5675ex]

\item[-] $\en_{\rho_{t+1},\sigma_{t+1}}(P_{t+2}) = \en_{\rho_{t+1},\sigma_{t+1}}(P_{t+1})$ and  $\ell_{\rho_h,\sigma_h}(P_{t+2}) = \ell_{\rho_h,\sigma_h}(P_{t+1})$ for $0\le h < t+1$,

\item[-] $(A_{t+2},(\rho_{t+2},\sigma_{t+2}))$ is a regular corner of type~I or~II of $(P_{t+2},Q_{t+2})$, where
$$
\qquad\quad (\rho_{t+2},\sigma_{t+2})\coloneqq \Pred_{P_{t+2}}(\rho_{t+1},\sigma_{t+1})\quad\text{and}\quad  A_{t+2}\coloneqq \frac{1}{m} \st_{\rho_{t+1},\sigma_{t+1}}(P_{t+2}),
$$

\item[-] $\mathcal{A}_{t+2}\ne \mathcal{A}_{t+1}'$, the pair $(\mathcal{A}_{t+1},\mathcal{A}_{t+1}')$ generates $\mathcal{A}_{t+2}$, and $v_{01}(A_{t+2})<v_{01}(A_{t+1})$,

\item[-] there exists $\lambda\in K^{\times}$ such that $m$ divides the multiplicity $m_{\lambda}$ of $z-\lambda$ in $\mathfrak{p}_{t+1}(z)$ and
$$
\qquad\quad A_{t+2} = \Bigl(\frac{k_{t+1}}{m l_{t+1}},0\Bigr) + \frac{m_{\lambda}}{m} \Bigl(-\frac{\sigma_{t+1}}{\rho_{t+1}}, 1\Bigr).
$$
Moreover $\ell_{\rho_{t+1},\sigma_{t+1}}(P_{t+2}) = \varphi(\ell_{\rho_{t+1},\sigma_{t+1}}(P_{t+1}))$, where $\varphi\in \Aut(L^{(l_{t+2})})$ is defined by
$$
\qquad\quad \varphi(x^{\frac{1}{l_{t+2}}})\coloneqq x^{\frac{1}{l_{t+2}}}\qquad\text{and}\qquad \varphi(y)\coloneqq y+\lambda x^{\frac{\sigma_{t+1}}{\rho_{t+1}}},
$$

\item[-] $A(P_{t+2}) = A(P_{t+1})\cup \{(\rho_{t+1},\sigma_{t+1})\}\cup \{(\rho,\sigma)\text{ in } A(P_{t+1}): (\rho,\sigma)<(\rho_{t+1},\sigma_{t+1})\in I\}$.

\end{itemize}
While regular corners of type~II occurs we continue with this process. Eventually a regular corner $(A_{j+1},(\rho_{j+1},\sigma_{j+1}))$ of type~I must occur. Finally, by \cite{GGV1}*{Proposition~5.13}, the chain~\eqref{cadena completa} is complete.
\end{proof}

\begin{remark}\label{rem 2.21p} By Theorem~\ref{modif final} below, if $(A_{j+1},(\rho_{j+1},\sigma_{j+1}))$ is a regular corner of type~I.a) of $(P_{j+1},Q_{j+1})$ in $L^{(l_{j+1})}$, then we can modify $(P_{j+1},Q_{j+1})$ in such a way that $(A_{j+1},(\rho_{j+1},\sigma_{j+1}))$ becomes of type~I.b).
\end{remark}

\begin{remark}\label{rem 2.21} Let $(P,Q)$ be a standard $(m,n)$-pair, let $j\in \mathds{N}$ and let
$$
\bigl((P_i,Q_i),(A_i,A_i'),(\rho_i,\sigma_i),l_i\bigr)_{0\le i\le j}\quad\text{and}\quad \bigl((P_{j+1},Q_{j+1}),A_{j+1},(\rho_{j+1},\sigma_{j+1}),l_{j+1}\bigr)
$$
satisfying items~(1)--(14) of Theorem~\ref{standard pair generates complete chain}. Let $h$ and $i$ be integers with $0\le h\le i\le j$. By items~(3), (5) and~(6), and \cite{GGV1}*{Theorem~7.6(2)}, there exists $d_h^{(i)}$ maximum such that
\begin{equation}\label{eq1}
\ell_{\rho_h,\sigma_h}(P_i) = R_{hi}^{m d_h^{(i)}}\qquad\text{for some $(\rho_h,\sigma_h)$-homogeneous $R_{hi}\in L^{(l_i)}$.}
\end{equation}
By item~(8) of~\cite{GGV1}*{Theorem~7.6} we know that
\begin{equation}\label{eqp5}
\# \Primefactors (d_h^{(i)})\ge i-h.
\end{equation}
Write $A_h = (a_h/l_h,b_h)$, $A_{h+1} = (a_{h+1}/l_{h+1},b_{h+1})$ and $A'_h = (a'_h/l_h,b'_h)$. We assert that
\vspace{1pt}
\begin{equation}\label{eq5}
d_h^{(i)}\Big| D_h^{(i)}\coloneqq \gcd\left(\frac{b_h-b'_h}{\gap(\rho_h,l_h)},b_h,b_{h+1},\frac{a_h l_i}{l_h},\frac{a'_h l_i}{l_h}\right).
\vspace{1pt}
\end{equation}
First note that by Theorem~\ref{standard pair generates complete chain}(5)
$$
(a_{h+1}/l_{h+1},b_{h+1}) = A_{h+1} = \frac 1m \st_{\rho_h,\sigma_h}(P_i)=d^{(i)}_h\st_{\rho_h,\sigma_h}(R_{hi}),
$$
and consequently $d_h^{(i)}|b_{h+1}$. By items~(4), (7) and~(8) of Theorem~\ref{standard pair generates complete chain} there exists $\lambda\in K$ such that
$$
\ell_{\rho_h,\sigma_h}(P_i) = \ell_{\rho_h,\sigma_h}(P_{h+1}) = \varphi(\ell_{\rho_h,\sigma_h}(P_h)),
$$
where $\varphi\in \Aut(L^{(l_{h+1})})$ is defined by
$$
\varphi(x^{\frac{1}{l_{h+1}}})\coloneqq x^{\frac{1}{l_{h+1}}}\qquad\text{and}\qquad \varphi(y)\coloneqq y + \lambda x^{\frac{\sigma_h}{\rho_h}}.
$$
Write $\widetilde{R}_{hi}\coloneqq \varphi^{-1}(R_{hi})$. Then
$$
\ell_{\rho_h,\sigma_h}(P_h) = \varphi^{-1}(\ell_{\rho_h,\sigma_h}(P_i)) = \widetilde R_{hi}^{m d_h^{(i)}},
$$
and so
$$
(A_h,A'_h)=((a_h/l_h,b_h),(a'_h/l_h,b'_h))=\Bigl(\en_{\rho_h,\sigma_h}\Bigl(\widetilde R_{hi}^{d_h^{(i)}}\Bigr),\st_{\rho_h,\sigma_h}\Bigl(\widetilde R_{hi}^{d_h^{(i)}}\Bigr)\Bigr).
$$
(Note that $\lambda = 0$ if and only if $(A_h,(\rho_h,\sigma_h))$ is a regular corner of type~II.a) of $(P_h,Q_h)$). Set  $z\coloneqq x^{-\frac{\sigma_h}{\rho_h}}y$ and write
$$
\widetilde R_{hi}^{d_h^{(i)}} = x^{\frac{a'_h}{l_h}}y^{b'_h}f_{hi}(z)\quad\text{and}\quad \widetilde R_{hi} = x^{\frac{u'_h}{l_i}} y^{v'_h} g_{hi}(z),
$$
where $f_{hi}$ and $g_{hi}$ are polynomials such that $f_{hi}(0)\ne 0$ and $g_{hi}(0)\ne 0$. Clearly
\vspace{1pt}
\begin{equation}\label{eq3}
d_h^{(i)}\Bigl| b'_h,\quad d_h^{(i)}\Bigl| b_h,\quad d_h^{(i)}\Bigl| \frac{a'_hl_i}{l_h},\quad d_h^{(i)}\Bigl| \frac{a_hl_i}{l_h}\quad \text{and}\quad f_{hi} = g_{hi}^{d_h^{(i)}}.
\vspace{1pt}
\end{equation}
Thus $d_h^{(i)}$ divides $b_h-b'_h$. We next prove that
\begin{equation}\label{eq4}
d_h^{(i)}\Bigl|\frac{b_h-b'_h}{\gap(\rho_h,l_h)}.
\end{equation}
Assume for a moment that $\gap(\rho_h,l_h)\mid t_{hi}$ where $t_{hi}\coloneqq \deg g_{hi}$ and write $t_{hi} = \gap(\rho_h,l_h) t'_{hi}$. From
$$
x^{\frac{a_h-a'_h}{l_h}}y^{b_h-b'_h} = z^{t_{hi}d_h^{(i)}} = x^{-\frac{t_{hi}d_h^{(i)}\sigma_h}{\rho_h}} y^{\gap(\rho_h,l_h) t'_{hi} d_h^{(i)}},
$$
we obtain that
$$
\gap(\rho_h,l_h) d_h^{(i)}\mid b_h-b'_h,
$$
from which~\eqref{eq4} follows. Consequently, we are reduced to prove that $\gap(\rho_h,l_h)\mid t_{hi}$. Suppose this is false and write
$$
g_{hi} = \sum_{u=0}^{t_{hi}} a_uz^u
$$
Let $v$ be the minimum $u$ such that $a_u\ne 0$ and $\gap(\rho_h,l_h)\nmid u$. A direct computation using that $\gap(\rho_h,l_h)\nmid v$ and that $\gap(\rho_h,l_h)\mid u$ for all $u<v$ such that $a_u\ne 0$, shows that the coefficient of $z^v$ in $g_{hi}^{md_h^{(i)}}(z)$ is $md_h^{(i)}a_0^{md_h^{(i)}-1}a_v\ne 0$. But this is impossible, since
$$
x^{\frac{ma'_h}{l_h}}y^{mb'_h} g_{hi}^{md_h^{(i)}}(z)
=\widetilde R_{hi}^{md_h^{(i)}} = \ell_{\rho_h,\sigma_h}(P_h)\in L^{(l_h)}\qquad\text{and}\qquad z^v = x^{-\frac{\sigma_h v}{\rho_h}} y^v\notin L^{(l_h)}.
$$
This proves~\eqref{eq4} and thus finishes the proof of~\eqref{eq5}.
\end{remark}

\begin{remark} From inequality~\eqref{eqp5} and condition~\eqref{eq5} (both with $h=0$ and $i=j$), we obtain that $j\le \# \Primefactors(D)$, where $D\coloneqq \gcd(b_0,(b_0-b'_0)/\rho_0)$.
\end{remark}

\subsection{Divisibility conditions and admissible complete chains}

In this subsection we first prove that if a complete chain $\mathscr{C} = (\mathcal{C}_0,\dots,\mathcal{C}_j,\mathcal{A}_{j+1})$ is constructed from a standard $(m,n)$-pair $(P,Q)$ as in Theorem~\ref{standard pair generates complete chain}, then $\mathscr{C}$ satisfies certain arithmetic conditions. In Definition~\ref{cond div} we name arbitrary complete chains that satisfy these properties ``admissible complete chains''. Then we obtain a procedure in order to determine if a given complete chain is admissible.

\smallskip

Let $(P,Q)$ be an standard $(m,n)$-pair, let $j\in \mathds{N}$ and let
$$
\bigl((P_i,Q_i),(A_i,A_i'),(\rho_i,\sigma_i),l_i\bigr)_{0\le i\le j}\quad\text{and}\quad \bigl((P_{j+1},Q_{j+1}),A_{j+1},(\rho_{j+1},\sigma_{j+1}),l_{j+1}\bigr)),
$$
be as in Remark~\ref{rem 2.21}. By items~(3), (5) and~(6) of Theorem~\ref{standard pair generates complete chain} and \cite{GGV1}*{The\-orem~7.6(3)} (which applies since $v_{\rho_h,\sigma_h}(P_h)>0$ by \cite{GGV1}*{Corollary~5.7(1)}) for $h\le j$ there exist $p_h,q_h\in \mathds{N}$ coprime and a $(\rho_h,\sigma_h)$-homogeneous element $F_h\in L^{(l_h)}$ such that,
\begin{equation*}
v_{\rho_h,\sigma_h}(F_h) = \rho_h\!+\!\sigma_h,\quad [F_h,\ell_{\rho_h,\sigma_h}(P_h)] = \ell_{\rho_h,\sigma_h}(P_h) \quad\text{and}\quad \en_{\rho,\sigma}(F_h) = \frac{p_h}{q_h}\frac{1}{m}\en_{\rho_h,\sigma_h}(P_h).
\end{equation*}
Let $\varphi\in \Aut(L^{(l_{h+1})})$ be as in Remark~\ref{rem 2.21}. Since $\varphi$ is $(\rho_h,\sigma_h)$-homogeneous,
$$
v_{\rho_h,\sigma_h}(\varphi(F_h)) = \rho_h+\sigma_h.
$$
Moreover, by \cite{GGV1}*{Remark~3.10} and items~(7) and~(8) of Theorem~\ref{standard pair generates complete chain},
$$
[\varphi(F_h),\ell_{\rho_h,\sigma_h}(P_{h+1})] = [\varphi(F_h),\varphi(\ell_{\rho_h,\sigma_h}(P_h))] = \varphi(\ell_{\rho_h,\sigma_h}(P_h)) = \ell_{\rho_h,\sigma_h}(P_{h+1}).
$$
Thus, by item~(4) of Theorem~\ref{standard pair generates complete chain}
\begin{equation}\label{eq17}
[\varphi(F_h),\ell_{\rho_h,\sigma_h}(P_i)] = \ell_{\rho_h,\sigma_h}(P_i)\qquad\text{for $h<i\le j$.}
\end{equation}
Since $\rho_h>0$, the end point of each $(\rho_h,\sigma_h)$-homogeneous element $F$ of $L^{(l_i)}$ is the support of the monomial of greatest degree in $y$ of $F$. Consequently
$$
\en_{\rho_h,\sigma_h}(F_h) = \en_{\rho_h,\sigma_h}(\varphi(F_h)),
$$
because the monomials of greatest degree in $y$ of $F_h$ and $\varphi(F_h)$ coincide. Note that since $(A_h,(\rho_h,\sigma_h))$ is a regular corner of type~II) of $P_i$ the hypothesis of \cite{GGV1}*{Proposition~2.11(5)} are fulfilled, and so $\varphi(F_h)$ is the unique $(\rho_h,\sigma_h)$-homogeneous element of $L^{(l_i)}$ that satisfies equality~\eqref{eq17}.

\begin{remark}\label{admis}
By items~(4), (5), (6) and~(8) of~\cite{GGV1}*{Theorem~7.6} the following conditions hold:

\begin{itemize}[itemsep=0.5ex,topsep=0.5ex]

\item[-] $q_h\nmid d_h^{(i)}$ for all $0\le h\le i\le  j$.

\item[-] $q_k \mid d_h^{(i)}$ for all $0\le h< k\le i\le  j$.

\item[-] $q_h\nmid q_k$ for all $0\le h<k\le  j$.

\end{itemize}
Note that since
$$
\gcd(p_h,q_h) = 1\quad\text{and}\quad \frac{p_h}{q_h}=\frac{\rho_h+\sigma_h}{v_{\rho_h,\sigma_h}(A_h)},
$$
we have
\begin{equation}\label{q_k}
p_h=\frac{\rho_h + \sigma_h}{\gcd(\rho_h+\sigma_h,v_{\rho_h,\sigma_h}(A_h))} \quad\text{and}\quad q_h=\frac{v_{\rho_h,\sigma_h}(A_h)}{\gcd(\rho_h+\sigma_h,v_{\rho_h,\sigma_h}(A_h))}.
\end{equation}
\end{remark}

Let $(\mathcal{C}_0,\dots,\mathcal{C}_j,\mathcal{A}_{j+1})$ be a complete chain (see Definition~\ref{complete chain}). For $0\le i\le j$, write
$$
\mathcal{C}_i = (\mathcal{A}_i,\mathcal{A}'_i),\quad \mathcal{A}_i = (a_i\wrs l_i,b_i),\quad \mathcal{A}'_i = (a'_i\wrs l_i,b'_i) \quad\text{and}\quad (\rho_i,\sigma_i)\coloneqq \dir(A_i-A'_i),
$$
and write
$$
\mathcal{A}_{j+1} = (a_{j+1}\wrs l_{j+1},b_{j+1}).
$$
Now for $0\le h\le j$, we can define $p_h$ and $q_h$ by equalities~\eqref{q_k}, and we do it. Moreover, as in Remark~\ref{rem 2.21}, we set
$$
D_h^{(i)}\coloneqq \gcd\left(\frac{b_h-b'_h}{\gap(\rho_h,l_h)},b_h,b_{h+1},\frac{a_h l_i}{l_h},\frac{a'_h l_i}{l_h}\right).
$$

\begin{definition}\label{cond div}
A complete chain is called an {\em admissible complete chain} if for all $0\le h<i\le  j$ it satisfies
$$
q_i \mid D_h^{(i)},\quad q_h\nmid q_i\quad\text{and}\quad \# \Primefactors (D_h^{(i)})\ge i-h.
$$
\end{definition}

By Remark~\ref{admis}, inequality~\eqref{eqp5} and condition~\eqref{eq5} every complete chains arising from a standard $(m,n)$-pair $(P,Q)$ is admissible. In Algorithm~\ref{GetIsAdmissible} we give a procedure to verify if an arbitrary complete chain is admissible.

\begin{center}
\begin{algorithm}[H]
  \DontPrintSemicolon
    \KwIn{A complete chain $\mathcal{C} = (\mathcal{C}_0,\dots,\mathcal{C}_j,\mathcal{A}_{j+1})$ with $\mathcal{C}_i=(\mathcal{A}_i,\mathcal{A}'_i) = \bigl((a_i\wrs l_i,b_i),(a'_i\wrs l_i,b'_i)\bigr)$.}
	\KwOut{A boolean variable $\IsAdmissible$.}
    $h \gets 0$\;
    $i \gets 1$\;
    $\IsAdmissible\gets \TRUE$\;
    \While{$h<j$ \normalfont\textbf{ and } $\IsAdmissible = \TRUE$}{
    $(\rho, \sigma) \gets \dir(A_h-A'_h)$\;
    $\gap\gets \frac{\rho}{\gcd(\rho,l_h)}$\;
    $q\gets \frac{v_{\rho,\sigma}(A_h)}{\gcd(\rho+\sigma,v_{\rho,\sigma}(A_h))}$\;
       \While{$i\le j$ \normalfont\textbf{ and } $\IsAdmissible = \TRUE$}{
       $(\rho',\sigma') \gets \dir(A_i-A'_i)$\;
       $q'\gets \frac{v_{\rho',\sigma'}(A_i)}{\gcd(\rho'+\sigma',v_{\rho',\sigma'}(A_i))}$\;
       $D\gets \gcd\left(\frac{b_h-b'_h}{\gap},b_h,b_{h+1},\frac{a_h l_i}{l_h},\frac{a'_h l_i}{l_h}\right)$\;
        \eIf {$\# \Primefactors (D)\ge i-h$ \normalfont\textbf{ and } $q'\mid D$ \normalfont\textbf{ and } $q\nmid q'$}{$i\gets i+1$\;}{
            $\IsAdmissible\gets \FALSE$\;
				}
		}
    $h\gets h+1$\;
    $i\gets h+1$\;
    }
 {\bf RETURN} $\IsAdmissible$
 \caption{GetIsAdmissible}
\label{GetIsAdmissible}
\end{algorithm}
\end{center}

\end{spacing}

\begin{spacing}{0.99}

In Algorithm~\ref{Main algorithm} we obtain all admissible complete chains starting from a valid edge $(\mathcal{A},\mathcal{A}')$ with $v_{11}(A)\le M$ for a given upper bound $M$. Due to all the previous algorithms, this main procedure is short.

\begin{center}
\begin{algorithm}[H]
  \DontPrintSemicolon
    \KwIn{A positive integer $M$.}
	\KwOut{A list $\AdmissibleCompleteChains$ of all admissible complete chains
    $(\mathcal{C}_0,\dots,\mathcal{C}_j,\mathcal{A}_{j+1})$, with $v_{11}(A_0)\le M$, where $\mathcal{A}_0$ is the first coordinate of $\mathcal{C}_0$.}
    $\PLLC\gets\GetPossibleLastLowerCorners\bigl(\bigl\lfloor \frac{M}{2}\bigr\rfloor\bigr)$\;
       \For{$a=2$ \KwTo $\bigl\lfloor \frac{M}{2}\bigr\rfloor$}{
    \For{$b=a+1$ \KwTo $M-a$}{
    $\StartingEdges \gets \GetStartingEdges((a,b),\PLLC)$\;
        \For{$(\mathcal{A},\mathcal{A}')\in \StartingEdges$ }{$\CompleteChains \gets \GetCompleteChains(\mathcal{A},\mathcal{A}')$\;
        \For{$\mathcal{CH}\in \CompleteChains$ }{$\IsAdmissible \gets \GetIsAdmissible(\mathcal{CH})$\;
        \If{$\IsAdmissible = \TRUE$}{add $\mathcal{CH}$ to $\AdmissibleCompleteChains$}
              }
            }
        }
    }
 {\bf RETURN} $\AdmissibleCompleteChains$
 \caption{Main algorithm}
\label{Main algorithm}
\end{algorithm}
\end{center}

We want to apply Algorithm~\ref{Main algorithm} in order to obtain limitations on the possible counterexamples to the Jacobian Conjecture. Assume then that this conjecture is false. By \cite{GGV1}*{Corollary~5.21} we know there exists a counterexample $(P,Q)$ and $m,n\in \mathds{N}$ coprime such that $(P,Q)$ is a standard $(m,n)$-pair and a minimal pair, which  means that $\gcd(v_{1,1}(P),v_{1,1}(Q))=B$, where $B$ is as in~\eqref{def B}.

\smallskip

Let $A_0$ be as in Remark~\ref{rem 2.21}. By \cite{GGV1}*{Proposition~5.2 and Corollary~5.21(3)}
$$
A_0 = \frac{1}{m}\en_{10}(P)  \quad\text{and}\quad \gcd(v_{11}(P),v_{11}(Q)) = v_{11}(A_0).
$$
By Theorem~\ref{standard pair generates complete chain} and Remark~\ref{admis} we know that $\mathcal{A}_0$ is the first coordinate of $\mathcal{C}_0$ for one of the admissible complete chains obtained running Algorithm~\ref{Main algorithm} with $M \ge B$.

\section[Generation of $(m,n)$-families parameterized by $\mathds{N}_0$]{Generation of $\bm{(m,n)}$-families parameterized by $\mathds{N}_0$}

In this section, for a complete chain $\mathscr{C}\coloneqq (\mathcal{C}_0,\dots,\mathcal{C}_j,\mathcal{A}_{j+1})$, we obtain restrictions on all the possible $m$ and $n$ such that there could exist an $(m,n)$-pair $(P,Q)$ that generates $\mathscr{C}$ as in Theo\-rem~\ref{standard pair generates complete chain}.

\begin{proposition}\label{modif final}
If an $(m,n)$-pair $(P,Q)$ in $L^{(l)}$ has a regular corner $(A,(\rho,\sigma))$ of type~I.a), then $\rho\mid l$ and there exists $\varphi\in\Aut(L^{(l)})$, such that $(\varphi(P),\varphi(Q))$ is an $(m,n)$-pair and $(A,(\rho,\sigma))$ is a regular corner of type~I.b) of $(\varphi(P),\varphi(Q))$. Moreover, the regular corners of $(P,Q)$ and the regular corners of $(\varphi(P),\varphi(Q))$, coincide.
\end{proposition}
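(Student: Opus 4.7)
My plan is to construct $\varphi$ explicitly as a shift of the form $\varphi(x^{1/l})=x^{1/l}$, $\varphi(y)=y+\lambda x^{\sigma/\rho}$, where $\lambda\in K^{\times}$ is a root of the polynomial $p$ in the standard decomposition
\[
\ell_{\rho,\sigma}(P)=x^{ma'/l}y^{mb'}\,p(z),\qquad z\coloneqq x^{-\sigma/\rho}y,\qquad p(0)\neq 0.
\]
This is the natural analogue, in the type~I setting, of the automorphism used at the type~II.b) step in the proof of Theorem~\ref{standard pair generates complete chain}.

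First I would show $\rho\mid l$, which is exactly what is required for $\varphi$ to be a well-defined automorphism of $L^{(l)}$: since $\gcd(\rho,\sigma)=1$, the monomial $x^{\sigma/\rho}$ lies in $L^{(l)}$ if and only if $\rho\mid l$. The type~I.a) hypothesis guarantees that $p$ is non-constant, and by Remark~\ref{comentarios0} all its roots are simple. Combining $(\rho,\sigma)$-homogeneity of $\ell_{\rho,\sigma}(P)\in L^{(l)}$ with the specific support configuration characterising type~I.a) then forces $\rho\mid l$. With $\varphi$ well-defined, I would use that it is $(\rho,\sigma)$-homogeneous (\cite{GGV1}*{Remark~3.10}) to compute
\[
\ell_{\rho,\sigma}(\varphi(P))=\varphi(\ell_{\rho,\sigma}(P))=x^{ma'/l}(y+\lambda x^{\sigma/\rho})^{mb'}\,p(z+\lambda).
\]
Choosing $\lambda$ to be a simple root of $p$, the factorisation $p(z+\lambda)=z\,\widehat p(z)$ with $\widehat p(0)\neq 0$ allows me to identify the monomial of extremal $y$-degree in the expanded product, and hence the new starting point $\st_{\rho,\sigma}(\varphi(P))/m$. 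The calculation, driven by a careful bookkeeping of the binomial expansion of $(y+\lambda x^{\sigma/\rho})^{mb'}$ against $z\widehat p(z)$, shows that this new starting point has moved to the opposite side of the anti-diagonal $v_{1,-1}=0$ from $A'$, which is precisely the feature distinguishing type~I.b) from type~I.a).

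The $(m,n)$-pair structure is preserved because $\varphi\in\Aut(L^{(l)})$ has Jacobian $1$, so $[\varphi(P),\varphi(Q)]=\varphi([P,Q])=[P,Q]\in K^{\times}$, and because $\varphi$ respects the $(1,1)$-grading that determines $m$ and $n$. The invariance of the set of regular corners follows from a direct adaptation of \cite{GGV1}*{Proposition~5.18}: the shift $y\mapsto y+\lambda x^{\sigma/\rho}$ leaves $\ell_{\rho_h,\sigma_h}(P)$ intact for all directions $(\rho_h,\sigma_h)>(\rho,\sigma)$ in $I$ (the higher layers, where the $\varphi$-correction is of strictly lower $v_{\rho_h,\sigma_h}$-weight), the corner $(A,(\rho,\sigma))$ itself remains regular because its transformed commutator $[\varphi(\ell_{\rho,\sigma}(P)),\varphi(\ell_{\rho,\sigma}(Q))]$ is still in $K^{\times}$, and for lower directions the leading forms are modified compatibly without creating or destroying regular corners. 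The main obstacle I anticipate is the precise justification of $\rho\mid l$ from the definition of type~I.a), together with the certification that the new starting point really lands in the type~I.b) regime; both reduce to exploiting the simplicity of the roots of $p$ supplied by Remark~\ref{comentarios0} in tandem with the geometric constraints of type~I.a).
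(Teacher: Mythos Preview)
Your overall strategy---shift by a root $\lambda$ of $p$---is exactly the paper's, but several of the supporting arguments are wrong or missing.

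\textbf{The proof of $\rho\mid l$.} Your sentence ``Combining $(\rho,\sigma)$-homogeneity of $\ell_{\rho,\sigma}(P)\in L^{(l)}$ with the specific support configuration characterising type~I.a) then forces $\rho\mid l$'' does not work. From $\ell_{\rho,\sigma}(P)\in L^{(l)}$ alone one only deduces that $p(z)=\overline p\bigl(z^{\gap(\rho,l)}\bigr)$, i.e.\ that the nonzero coefficients of $p$ sit at indices divisible by $\gap(\rho,l)$; this does not force $\gap(\rho,l)=1$. The paper's argument is different and essential: using $b'=0$ (which follows from \cite{GGV1}*{Proposition~5.13a)} and which you never invoke) it expands $[\ell_{\rho,\sigma}(P),\ell_{\rho,\sigma}(Q)]$ explicitly, uses the type~I condition $[\ell_{\rho,\sigma}(P),\ell_{\rho,\sigma}(Q)]\in K^{\times}$ to see that the constant term is $\frac{a'}{l}(ma_0b_1-na_1b_0)$, and concludes $a_1\neq0$ or $b_1\neq0$. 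Only then does the support of the degree-$1$ monomial in $z$ force $\sigma/\rho\in\frac1l\mathds Z$. Without the bracket computation there is no reason for the coefficient $a_1$ (or $b_1$) to be nonzero.

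\textbf{The I.a)/I.b) distinction.} You write that the new starting point ``has moved to the opposite side of the anti-diagonal $v_{1,-1}=0$'', and that this ``is precisely the feature distinguishing type~I.b) from type~I.a)''. That is the type~II.a)/II.b) distinction, not the type~I one. Type~I.b) is characterised by $\{\st_{\rho,\sigma}(P),\st_{\rho,\sigma}(Q)\}=\{(k/l,0),(1-k/l,1)\}$; to verify it you must control the starting point of $\varphi(Q)$ as well as that of $\varphi(P)$. The paper does this, again via the bracket: from $-\frac{na'}{l}\overline a_1\overline b_0\in K^{\times}$ it reads off $\overline b_0=q(\lambda)\neq0$, hence $\st_{\rho,\sigma}(\varphi(Q))=(na'/l,0)$. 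Simplicity of $\lambda$ gives $\overline a_1\neq0$, hence $\st_{\rho,\sigma}(\varphi(P))=(ma'/l-\sigma/\rho,1)$. Your proposal never touches $Q$ after the shift.

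\textbf{Preservation of the $(m,n)$-pair structure.} The automorphism $\varphi$ does \emph{not} respect the $(1,1)$-grading: $v_{1,1}(y)=1$ while $v_{1,1}(\lambda x^{\sigma/\rho})=\sigma/\rho<1$. The paper instead uses \cite{GGV1}*{Proposition~3.9}: for every $(\rho_1,\sigma_1)>(\rho,\sigma)$ one has $\ell_{\rho_1,\sigma_1}(\varphi(H))=\ell_{\rho_1,\sigma_1}(H)$, and this (applied at $(1,1)$ and $(1,0)$) is what preserves the ratios $v_{1,1}(P)/v_{1,1}(Q)$ and $v_{1,0}(P)/v_{1,0}(Q)$.
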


\begin{proof} Let $A'\coloneqq \frac{1}{m}\st_{\rho,\sigma}(P)$ and write $A = (a/l,b)$ and $A' = (a'/l,b')$. By~\cite{GGV1}*{Proposition~5.13a)} we know that $b'=0$. Write
$$
\ell_{\rho,\sigma}(P) = x^{\frac{ma'}{l}}  p(z)\quad\text{with $z\coloneqq x^{-\frac{\sigma}{\rho}}y$, $p(z) = \sum a_iz^i\in K[z]$ and $a_0\ne 0$,}
$$
and
$$
\ell_{\rho,\sigma}(Q) = x^{\frac{na'}{l}}  q(z)\quad\text{with $z\coloneqq x^{-\frac{\sigma}{\rho}}y$, $q(z) = \sum b_iz^i\in K[z]$ and $b_0\ne 0$.}
$$
A direct computation shows that there exists $S\in L^{(l)}$, such that
$$
[\ell_{\rho,\sigma}(P),\ell_{\rho,\sigma}(Q)]= \frac{a'}{l}(ma_0b_1-na_1b_0) x^{\frac{ma'+na'}{l}-\frac{\sigma}{\rho}-1}+y S.
$$
Since $(A,(\rho,\sigma))$ of type~I, we have $[\ell_{\rho,\sigma}(P),\ell_{\rho,\sigma}(Q)]\ne 0$. So, by \cite{GGV1}*{Proposition~1.13}
\begin{equation}\label{eq8}
[\ell_{\rho,\sigma}(P),\ell_{\rho,\sigma}(Q)]=\ell_{\rho,\sigma}([P,Q])\in K^{\times}.
\end{equation}
Thus, necessarily $\frac{(m+n)a'}{l}-\frac{\sigma}{\rho}=1$ and $a_1\ne 0$ or $b_1\ne 0$. If $a_1\ne 0$, then
$$
\Bigl(\frac{ma'}{l}-\frac{\sigma}{\rho},1\Bigr)\in \Supp(\ell_{\rho,\sigma}(P))\subseteq \frac{1}{l}\mathds{Z}\times \mathds{N}_0.
$$
Since $\bigl(\frac{ma'}{l},0\bigr)$ also is in $\Supp(\ell_{\rho,\sigma}(P))\subseteq \frac{1}{l}\mathds{Z}\times \mathds{N}_0$, we conclude that $\frac{\sigma}{\rho}\in \frac{1}{l}\mathds{Z}$, which implies $\rho\mid l$. Si\-milarly, if $b_1\ne 0$, then we also obtain $\rho\mid l$, as desired. Now let $z-\lambda$ be a linear factor of $p(z)$. Define $\varphi\in \Aut(L^{(l)})$ by
$$
\varphi(x^{1/l})\coloneqq x^{1/l}\quad\text{and}\quad \varphi(y)\coloneqq y+\lambda x^{\sigma/\rho}.
$$
Then
$$
\varphi(\ell_{\rho,\sigma}(P)) = x^{\frac{ma'}{l}} p(z+\lambda) = x^{\frac{ma'}{l}}\ov p(z)\quad\text{and}\quad \varphi(\ell_{\rho,\sigma}(Q)) = x^{\frac{na'}{l}} q(z+\lambda) = x^{\frac{na'}{l}}\ov q(z),
$$
where $\ov p(z) = p(z+\lambda)$ and $\ov q(z) = q(z+\lambda)$. By~\cite{GGV1}*{Proposition~3.9} we know that, for all $H\in L^{(l)}$,
\begin{align}
&\ell_{\rho,\sigma}(\varphi(H)) = \varphi(\ell_{\rho,\sigma}(H)),\qquad \en_{\rho,\sigma}(\varphi(H)) = \en_{\rho,\sigma}(H)\notag
\shortintertext{and}
&\ell_{\rho_1,\sigma_1}(\varphi(H)) = \ell_{\rho_1,\sigma_1}(H)\quad\text{for all $(\rho,\sigma)< (\rho_1,\sigma_1) \le (1,1)$}.\label{eq9}
\end{align}
Using this with $H =P$ and $H = Q$, we obtain that
$$
\frac{v_{11}(\varphi(P))}{v_{11}(\varphi(Q))} = \frac{v_{10}(\varphi(P))}{v_{10}(\varphi(Q))} = \frac{m}{n}\quad\text{and}\quad v_{1,-1(\en_{10}(\varphi(P)))} < 0.
$$
Hence $(\varphi(P),\varphi(Q))$ is an $(m,n)$-pair, since $[\varphi(P),\varphi(Q)] = [P,Q]\in K^{\times}$, by~\cite{GGV1}*{Proposition~3.10}. We claim that $(\rho,\sigma)\in \Dir(\varphi(P))$. In fact since
$$
\ell_{\rho,\sigma}(\varphi(P)) = \varphi(\ell_{\rho,\sigma}(P)) = x^{\frac{ma'}{l}}\ov p(z),
$$
in order to see this it suffices to show that $\ov p$ is not a monomial, which follows easily from the fact that $\deg(p) = m(b-b')>1$ and $\lambda$ is a simple root of $p$ by Remark~\ref{comentarios0}. Write $\ov p(z)=\sum_i \ov a_iz^i$ and $\ov q(z)=\sum_i \ov b_iz^i$. By~\cite{GGV1}*{Proposition~3.10} and~\eqref{eq8}, we have
$$
[\ell_{\rho,\sigma}(\varphi (P)),\ell_{\rho,\sigma}(\varphi (Q))] = [\varphi(\ell_{\rho,\sigma}(P)),\varphi(\ell_{\rho,\sigma}(Q))] = \varphi([\ell_{\rho,\sigma}(P),\ell_{\rho,\sigma}(Q)]) \in K^{\times}.
$$
Using this and the fact that $\ov a_0 = p(\lambda) = 0$ we obtain
$$
-\frac{n a'}{l}\ov a_1\ov b_0 = [\ell_{\rho,\sigma}(\varphi (P)),\ell_{\rho,\sigma}(\varphi (Q))]\in K^{\times}.
$$
Hence
$$
\st_{\rho,\sigma}(\varphi(P))=\Bigl(\frac{ma'}{l}-\frac{\sigma}{\rho},1\Bigr)\quad\text{and}\quad \st_{\rho,\sigma}(\varphi(Q))=\Bigl(\frac{na'}{l},0\Bigr),
$$
and so $(A,(\rho,\sigma))$ is a regular corner of type~I.b) of $(\varphi(P),\varphi(Q))$. Using this, that $(A,(\rho,\sigma))$ is a regular corner of type~I) of $(P,Q)$, equalities~\eqref{eq9} with $H=P$, and~\cite{GGV1}*{Remark~5.10 and Theorem~7.6(1)}, we obtain that $(P,Q)$ and $(\varphi(P),\varphi(Q))$ have the same regular corners.
\end{proof}

Let $((a/l,b),(\rho,\sigma))$ be a regular corner of type~I.b) of an $(m,n)$-pair $(P,Q)$ in $L^{(l)}$. According to~\cite{GGV1}*{Proposition~5.13b)} there exists $k\in\mathds{N}$, with $k<l-\frac{a}{b}$ such that
\begin{equation}\label{conjunto de starting}
\{\st_{\rho,\sigma}(P),\st_{\rho,\sigma}(Q)\} = \left\{\left(\frac{k}{l},0\right), \left(1-\frac{k}{l},1\right) \right\},
\end{equation}

\begin{proposition}\label{extremosfinales1} Let $e_k\coloneqq  \gcd(k,bl-a)$. If $\st_{\rho,\sigma}(Q) = (k/l,0)$, then $\frac{k}{e_k}\mid n$ and
\begin{equation}\label{ecuacion diofantica}
(m+n)b-\frac{ne_k}{k} \frac{bl-a}{e_k}=1,
\end{equation}
while if $\st_{\rho,\sigma}(P)= (k/l,0)$, then $\frac{k}{e_k}\mid m$ and
\begin{equation}\label{ecuacion diofantica2}
(m+n)b-\frac{me_k}{k} \frac{bl-a}{e_k}=1.
\end{equation}
\end{proposition}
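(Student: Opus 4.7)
The plan is to convert the line geometry of the two edges into a pair of linear relations in $m,n,k,\rho,\sigma$ and then to eliminate $\rho$ and $\sigma$, obtaining a single integer identity that immediately yields both the divisibility claim and the diophantine equation. Throughout, since $(\rho,\sigma)\in I$ one has $\rho\ge 1$ and $\gcd(\rho,\sigma)=1$, so $(-\sigma,\rho)$ is the primitive direction of the line of constant $v_{\rho,\sigma}$; and since $(P,Q)$ is an $(m,n)$-pair with regular corner $A=(a/l,b)$, one has $\en_{\rho,\sigma}(P)=mA$ and $\en_{\rho,\sigma}(Q)=nA$.

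First I would treat the case $\st_{\rho,\sigma}(Q)=(k/l,0)$, so that by~\eqref{conjunto de starting} also $\st_{\rho,\sigma}(P)=((l-k)/l,1)$. The vectors $\en_{\rho,\sigma}(P)-\st_{\rho,\sigma}(P)$ and $\en_{\rho,\sigma}(Q)-\st_{\rho,\sigma}(Q)$ both lie on a line of constant $v_{\rho,\sigma}$, so each is a positive multiple of $(-\sigma,\rho)$. Matching $x$- and $y$-components for the $Q$-edge yields
$$n(\rho a+\sigma bl)=\rho k,$$
while for the $P$-edge it gives
$$m(\rho a+\sigma bl)=l(\rho+\sigma)-\rho k.$$
Adding the two relations produces the clean identity $(m+n)(\rho a+\sigma bl)=l(\rho+\sigma)$.

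The decisive step is now to eliminate $\rho$ and $\sigma$. From the first equation one has $\sigma bl=\rho(k-na)/n$; substituting this into the sum, multiplying through by $nb$ to clear denominators, and cancelling the common factor of $\rho$ (the algebraic reason the arithmetic conclusion is direction-independent), one is left with
$$k\bigl[(m+n)b-1\bigr]=n(bl-a).$$
Dividing by $e_k=\gcd(k,bl-a)$ yields $(k/e_k)[(m+n)b-1]=n(bl-a)/e_k$, and since $\gcd(k/e_k,(bl-a)/e_k)=1$ by the definition of $e_k$, one concludes $(k/e_k)\mid n$. Rewriting the identity as
$$(m+n)b-\frac{ne_k}{k}\,\frac{bl-a}{e_k}=1$$
then gives exactly~\eqref{ecuacion diofantica}. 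The case $\st_{\rho,\sigma}(P)=(k/l,0)$ is entirely symmetric: exchanging $P\leftrightarrow Q$ throughout swaps $m\leftrightarrow n$ in the derivation and delivers~\eqref{ecuacion diofantica2}. The main place where care is needed is the sign and positivity bookkeeping: one must check that the parameters in the two line parameterizations are truly positive (this follows from $\rho>0$, $b\ge 1$ and $mb\ge 2$), and that the hypothesis $k<l-a/b$ ensures $bl-a>0$, so that the divisibility conclusion is nontrivial. Aside from these routine verifications, the proof is just the short algebraic manipulation of the two slope equations described above.
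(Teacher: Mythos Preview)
Your proof is correct and follows essentially the same route as the paper: both arguments equate $v_{\rho,\sigma}$ at the start and end points of the $P$- and $Q$-edges, eliminate $\rho$ and $\sigma$, and arrive at the identity $(m+n)bk - n(bl-a)=k$ (equivalently $k[(m+n)b-1]=n(bl-a)$), from which the divisibility $\tfrac{k}{e_k}\mid n$ and equation~\eqref{ecuacion diofantica} follow by coprimality of $k/e_k$ and $(bl-a)/e_k$. The only cosmetic difference is that you first add the two relations to obtain the symmetric identity $(m+n)(\rho a+\sigma bl)=l(\rho+\sigma)$ before substituting, whereas the paper substitutes directly; the algebra and conclusion are otherwise identical.
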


\begin{proof} Assume first that $\st_{\rho,\sigma}(Q) = (k/l,0)$. Since, by \cite{GGV1}*{Corollary~5.7(2)},
$$
\en_{\rho,\sigma}(P) = m\left(\frac{a}{l},b\right)\qquad\text{and}\qquad \en_{\rho,\sigma}(Q) = n\left(\frac{a}{l},b\right),
$$
we have
\begin{align*}
&\rho-\frac{\rho k}{l}+\sigma = v_{\rho,\sigma}(\st_{\rho,\sigma}(P)) = v_{\rho,\sigma}(\en_{\rho,\sigma}(P)) = m\left(\frac{a\rho}{l}+b\sigma\right)
\shortintertext{and}
&\frac{\rho k}{l}= v_{\rho,\sigma}(\st_{\rho,\sigma}(Q)) = v_{\rho,\sigma}(\en_{\rho,\sigma}(Q)) = n \left(\frac{a \rho}{l}+b\sigma\right),
\end{align*}
which leads to
\begin{equation}\label{igualdad st y en}
1-\frac k{l} +\frac{\sigma}{\rho} = \frac{ma}{l}+mb \frac{\sigma} {\rho}\qquad\text{and}\qquad \frac{\sigma}{\rho}=\frac{k-na}{nlb}.
\end{equation}
Hence,
$$
\frac{ma}{l}+mb\frac{k-na}{nlb}=1-\frac k{l} +\frac{k-na}{nl b},
$$
which gives
$$
(m+n)bk - n(bl-a) = k.
$$
Therefore $k\mid  n(bl-a)$. Since $\frac{k}{e_k}$ and $\frac{bl-a}{e_k}$ are coprime, necessarily $\frac{k}{e_k}\mid n$. So, equality~\eqref{ecuacion diofantica} is true. The case $\st_{\rho,\sigma}(P)= (k/l,0)$ is similar.
\end{proof}

Let $\mathcal{A}\coloneqq (a\wrs l,b)\in \mathds{N}_{\!(l)}\times \mathds{N}_0$ be a final corner and let $k\in \mathds{N}$ be such that $k < l - \frac{a}{b}$. We want to find all the $(m,n)\in \mathds{N}^2$ such that one of the equalities~\eqref{ecuacion diofantica} or~\eqref{ecuacion diofantica2} is satisfied. By symmetry it suffices to find the set of all those $(m,n)\in \mathds{N}^2$ such that equality~\eqref{ecuacion diofantica} is satisfied and then to add to this set the pairs obtained by swapping $m$ with $n$. For the first task we proceed as follows: we first check that
$$
\gcd\left(b,\frac{bl-a}{e_k}\right)=1,\quad\text{where $e_k\coloneqq \gcd(k,bl-a)$.}
$$
If this is the case we determine the Bezout coefficients $M,N$ with $N\ge 1$ in
$$
Mb-N\frac{bl-a}{e_k}=1.
$$
For each solution $(M,N)$ we set $n\coloneqq \frac{Nk}{e_k}$ and $m\coloneqq M-n$. Since $b<\frac{bl-a}{k}$, we have
$$
mb = Mb-\frac{Nk}{e_k}b > Mb-\frac{Nk}{e_k}\frac{bl-a}{k}=1,
$$
which implies that $m\ge 1$ as desired. Then we keep all the pairs $(m,n)$ that also satisfy $m>1$, $n>1$ and $\gcd(m,n) = 1$.

\begin{definition}\label{mn families}
Let $\mathcal{A}\coloneqq (a\wrs l,b)\in \mathds{N}_{\!(l)}\times \mathds{N}_0$ be a final corner and let
$$
I(\mathcal{A})\coloneqq \left\{k\in\mathds{N}: 1\le k<l-\frac{a}{b}\text{ and } \gcd\left(b,\frac{bl-a}{\gcd(k,bl-a)}\right) = 1\right\}.
$$
For each $k\in I(\mathcal{A})$ we set
$$
\MN_k(\mathcal{A})\coloneqq \left\{(m,n)\in \mathds{N}^2 :m,n>1,\ \gcd(m,n)=1\text{ and } (m+n)bk-n(bl-a)=k  \right\},
$$
and we define the set $\MN(\mathcal{A})$, of {\em possible $(m,n)$ for $\mathcal{A}$}, by
$$
\MN(\mathcal{A})\coloneqq \bigcup_{k\in I(\mathcal{A})} \MN_k(\mathcal{A}).
$$
\end{definition}

Next we describe these values as unions of infinite families of $(m,n)$'s, parameterized by $\mathds{N}_0$.

\smallskip

Let $k\in \mathds{N}$ be such that $1\le k<l- \frac{a}{b}$ and set $e_k\coloneqq \gcd(k,bl-a)$. Assume $\gcd\left(b,\frac{bl-a}{e_k}\right)=1$ and let $M_k$ and $N_k$ with $N_k\in \mathds{N}$ minimum satisfying
$$
M_kb-N_k\frac{bl-a}{e_k}=1.
$$
Then
$$
\left\{(M,N)\in \mathds{Z}\times \mathds{N}:Mb-N\frac{bl-a}{e_k}=1\right\} = \left\{\left(M_k+j\frac{bl-a}{e_k},N_k+jb\right): j\in \mathds{N}_k\right\}.
$$
Set
$$
m'_{kj}\coloneqq M_k+j\frac{bl-a}{e_k} - \frac{(N_k+jb)k}{e_k} \qquad\text{and}\qquad n'_{kj}\coloneqq \frac{(N_k+jb)k}{e_k}.
$$
Thus
$$
m'_{kj} = m'_{k0} + j\Delta_k^{\!(1)} \quad\text{and}\quad n'_{kj} = n'_{k0} + j\Delta_k^{\!(2)},\quad\text{where $\Delta_k^{\!(1)}\coloneqq \frac{bl-bk-a}{e_k}$ and $\Delta_k^{\!(2)}\coloneqq \frac{bk}{e_k}$}.
$$
So,
$$
m'_{k,j+1}>m'_{kj}\quad\text{and}\quad n'_{k,j+1}>n'_{kj}\qquad\text{for all $j\in \mathds{N}_0$.}
$$
Hence, by the comments above Definition~\ref{mn families},
we have $1\le m'_{k0},n'_{k0}$. Since we only want consider the $m'_{kj}$'s and $n'_{kj}$'s greater than~$1$, we set
$$
m_{kj} \coloneqq \begin{cases} m'_{kj} & \text{if $n'_{k0}>1$ and $m'_{k0}>1$, }\\ m'_{k,j+1} &\text{otherwise,}\end{cases}\quad\text{and}\quad n_{kj} \coloneqq \begin{cases} n'_{kj} & \text{if $n'_{k0}>1$ and $m'_{k0}>1$, }\\ n'_{k,j+1} & \text{otherwise.}\end{cases}
$$
Clearly
\begin{equation}\label{eq10}
m_{kj} = m_{k0} + j\Delta_k^{\!(1)} \qquad\text{and}\qquad n_{kj} = n_{k0} + j\Delta_k^{\!(2)}.
\end{equation}
With these notations,
$$
\Ss(\mathcal{A},k)\coloneqq \left\{(m,n)\in \mathds{N}^2 : m,n>1\text{ and } (m+n)bk-n(bl-a)=k  \right\}=\{(m_{kj},n_{kj}) : j\in \mathds{N}_0\}.
$$
Since
$$
\MN_k(\mathcal{A}) =  \left\{(m,n)\in \Ss(\mathcal{A},k) : \gcd(m,n)= 1\right\},
$$
we must choose the $(m,n)$'s in $\Ss(\mathcal{A},k)$ such that $\gcd(m,n)=1$. Note that
$$
mb\frac{k}{e_k}+ n\Bigl(b\frac{k}{e_k}-\frac{bl-a}{e_k}\Bigr)=\frac{k}{e_k},
$$
and so $\gcd(m,n)\mid \frac{k}{e_k}$. For  $i\in \bigl\{0,\dots,\frac{k}{e_k}-1\bigr\}$ we define
$$
\MN_{ki}(\mathcal{A})\coloneqq \left\{\left(m_{k,i+j\frac{k}{e_k}},n_{k,i+j\frac{k}{e_k}}\right): j\in \mathds{N}_0 \right\} =  \left\{\left(m_{ki}+j\frac{k}{e_k}\Delta_k^{\!(1)},n_{ki}+j \frac{k}{e_k} \Delta_k^{\!(2)}\right): j\in \mathds{N}_0 \right\}.
$$

\begin{lemma}\label{k sobre e mayor que uno} For all $i\in \bigl\{0,\dots,\frac{k}{e_k}-1\bigr\}$ and all $(m,n)\in \MN_{ki}(\mathcal{A})$, we have
$$
\gcd(m,n)=\gcd(m_{ki},n_{ki}).
$$
Moreover, there exists~$i$ such that $\gcd(m_{ki},n_{ki})=1$.
\end{lemma}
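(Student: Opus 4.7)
\textbf{Proof plan for Lemma~\ref{k sobre e mayor que uno}.} Set $r\coloneqq k/e_k$, $\Delta^{(1)}\coloneqq\Delta_k^{\!(1)}=(bl-a)/e_k-br$ and $\Delta^{(2)}\coloneqq\Delta_k^{\!(2)}=br$. The first key observation, which I would establish right away, is that every $(m,n)\in\Ss(\mathcal{A},k)$ satisfies $\gcd(m,n)\mid r$. This follows from dividing the defining relation $(m+n)bk-n(bl-a)=k$ through by $e_k$ to obtain
\[
m\cdot br + n\!\left(br-\tfrac{bl-a}{e_k}\right)=r,
\]
so any common divisor of $m$ and $n$ divides $r$.

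\textbf{First assertion.} For $(m,n)\in\MN_{ki}(\mathcal{A})$, write $m=m_{ki}+jr\Delta^{(1)}$ and $n=n_{ki}+jr\Delta^{(2)}$ for some $j\in\mathds{N}_0$. Both $\gcd(m,n)$ and $\gcd(m_{ki},n_{ki})$ divide $r$ by the observation above, hence both divide the integer $jr$ and therefore also $jr\Delta^{(1)}$ and $jr\Delta^{(2)}$. Consequently $\gcd(m_{ki},n_{ki})$ divides $m$ and $n$, while $\gcd(m,n)$ divides $m_{ki}$ and $n_{ki}$, which gives the desired equality.

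\textbf{Second assertion.} Let $p$ be any prime dividing $r$. From $e_k=\gcd(k,bl-a)$ we have $\gcd(r,(bl-a)/e_k)=1$, and the standing hypothesis of Definition~\ref{mn families} gives $\gcd(b,(bl-a)/e_k)=1$; together these imply $p\nmid(bl-a)/e_k$ and $p\nmid b\cdot$(well, $p\nmid (bl-a)/e_k$ is all we need). Since $\Delta^{(2)}=br$ is divisible by $p$, the congruence $n_{ki}\equiv n_{k0}\pmod p$ holds for every $i$, so the divisibility $p\mid n_{ki}$ is independent of $i$. On the other hand $\Delta^{(1)}\equiv(bl-a)/e_k\not\equiv0\pmod p$, so $\Delta^{(1)}$ is invertible modulo $p$ and the map $i\mapsto m_{ki}\bmod p$ is a bijection of $\mathds{Z}/p$; in particular there is exactly one bad residue $i_p\in\{0,\dots,p-1\}$ such that $p\mid m_{k,i_p}$.

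\textbf{Finishing with CRT.} Let $p_1,\dots,p_s$ be the distinct primes dividing $r$ with the additional property that $p_t\mid n_{k0}$. Since the $p_t$ are distinct and all divide $r$, their product $P\coloneqq p_1\cdots p_s$ divides $r$. By the Chinese Remainder Theorem, the number of residues $i\bmod P$ with $i\not\equiv i_{p_t}\pmod{p_t}$ for every $t$ equals $\prod_t(p_t-1)\ge1$, so such a residue exists; choose the representative $i\in\{0,\dots,P-1\}\subseteq\{0,\dots,r-1\}$. For this $i$, no prime $p\mid r$ divides both $m_{ki}$ and $n_{ki}$: if $p\nmid n_{k0}$ then $p\nmid n_{ki}$, while if $p\mid n_{k0}$ then $p=p_t$ for some $t$ and by construction $p\nmid m_{ki}$. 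Combined with the first paragraph (which forces any common prime divisor of $m_{ki}$ and $n_{ki}$ to lie among the primes dividing $r$), this gives $\gcd(m_{ki},n_{ki})=1$, as required. The only subtle point is the CRT step together with the simultaneous use of $\gcd(r,(bl-a)/e_k)=1$ to control $\Delta^{(1)}\bmod p$; everything else is a direct manipulation of the explicit formulas~\eqref{eq10}.
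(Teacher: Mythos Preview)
Your argument is correct. The first assertion is handled exactly as in the paper: once you know every $(m,n)\in\Ss(\mathcal{A},k)$ has $\gcd(m,n)\mid r$, the fact that the increments between consecutive members of $\MN_{ki}(\mathcal{A})$ are multiples of $r$ forces $\gcd(m,n)$ to be constant along the family.

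For the second assertion your route diverges from the paper's. You work prime by prime: for each prime $p\mid r$ you observe that $n_{ki}\bmod p$ is constant in $i$ while $m_{ki}\bmod p$ runs through all residues, isolate the primes that actually threaten coprimality (those dividing $n_{k0}$), and then invoke CRT to avoid all the bad residues simultaneously. The paper instead notices that $\gcd(\Delta_k^{\!(1)},r)=1$ globally (for the same reason you give modulo each $p$), so $i\mapsto m_{ki}\bmod r$ is a bijection of $\mathds{Z}/r\mathds{Z}$; it then simply picks $i$ with $m_{ki}\equiv 1\pmod r$, and since $d_{ki}\mid r$ and $d_{ki}\mid m_{ki}$ this forces $d_{ki}=1$ immediately. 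The paper's version is shorter and sidesteps CRT entirely, but your prime-by-prime analysis gives a bit more: it shows in particular that the proportion of good $i$'s is at least $\prod_{p\mid\gcd(r,n_{k0})}(1-1/p)$, whereas the paper's argument exhibits only a single good $i$.
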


\begin{proof}
Clearly $\MN_{ki}(\mathcal{A})\subseteq \Ss(\mathcal{A},k)$ and so, if $(m,n)\in \MN_{ki}(\mathcal{A})$, then $\gcd(m,n)\mid \frac{k}{e_k}$. Consequently, for $d_{ki}\coloneqq \gcd(m_{ki},n_{ki})$ we have
$$
d_{ki}\mid m_{ki}+j\frac{k}{e_k}\Delta_k^{\!(1)}\quad\text{and}\quad d_{ki}\mid n_{ki}+j\frac{k}{e_k}\Delta_k^{\!(2)}\qquad\text{for all $j$,}
$$
and hence $d_{ki}\mid \gcd(m,n)$ for all $(m,n)\in \MN_{ki}(\mathcal{A})$. Similarly one shows $\gcd(m,n)\mid d_{ki}$, which proves the first assertion. On the other hand, since $\gcd\left(\Delta_k^{\!(1)},\frac{k}{e_k}\right)=1$, the class $\bigl[\Delta_k^{\!(1)}\bigr]$ of~$\Delta_k^{\!(1)}$ in $\mathds{Z}/\frac{k}{e_k}\mathds{Z}$ is invertible, and so
$$
\left\{\left[m_{ki}\right]:i=0,\dots,\frac{k}{e_k}-1\right\} = \frac{\mathds{Z}}{\frac{k}{e_k}\mathds{Z}},
$$
where $[m_{ki}]$ denotes the class of $m_{ki} = m_{k0}+i\Delta_k^{\!(1)}$ in $\mathds{Z}/\frac{k}{e_k}\mathds{Z}$. It follows that there exists an $i$ such that $m_{ki}\equiv 1\pmod{\frac{k}{e_k}}$. Since $d_{ki}\mid m_{ki}$ and $d_{ki}\mid \frac{k}{e_k}$, we obtain $d_{ki}=1$, as desired.
\end{proof}

For each $k\in I(\mathcal{A})$ we let $J_k(\mathcal{A})$ denote $\bigl\{0\le i< \frac{k}{e_k}:\gcd(m_{ki},n_{ki})=1\bigr\}$, where $m_{ki}$ and $n_{ki}$ are as in~\eqref{eq10}. Using the previous results we obtain the following description of the set $\MN(\mathcal{A})$,
$$
\MN(\mathcal{A}) = \bigcup_{k\in I(\mathcal{A})} \MN_k(\mathcal{A})\quad\text{and}\quad \MN_k(\mathcal{A}) = \bigcup_{i\in J_k(\mathcal{A})} \MN_{ki}(\mathcal{A}).
$$

\begin{remark}
Note that for a final corner $\mathcal{A}$ the set $I(\mathcal{A})$ can be empty (for example take $\mathcal{A}=(16\wrs 3,10)$). However, if $k\in I(\mathcal{A})$, then by Lemma~\ref{k sobre e mayor que uno} there exists at least one $(m,n)$-family associated to $\mathcal{A}$. It follows that a final corner $\mathcal{A}=(a\wrs l,b)$ has at least one $(m,n)$-family attached to it, if and only if there exists $k\in\mathds{N}$ with $l-a/b>k\ge 1$, such that
$$
\gcd\left(b,\frac{bl-a}{\gcd(k,bl-a)}\right) = 1.
$$
\end{remark}

In Algorithm~\ref{GetmnFamilies} we obtain the set $\MN(\mathcal{A})$. To achieve this we use the auxiliary function $\BezoutCoefficients(x,y)$ which, for coprime positive integers $x$ and $y$, returns the ordered pair $(M,N)$ of positive integers such that $Mx-Ny=1$ and $N$ is minimal.

\begin{center}
\begin{algorithm}[H]
\DontPrintSemicolon
    \KwIn{A final corner $\mathcal{A}=(a\wrs l,b)$.}
	\KwOut{A list $\mnFamilies$ of triples $\bigl((k,i),(m_{ki},n_{ki}),(\Delta^{\!(1)},\Delta^{\!(2)})\bigr)$ such that $k\in I(\mathcal{A})$, $i\in
    J_k(\mathcal{A})$ and $\MN(\mathcal{A})=\bigcup_{k,i} \bigl\{(m_{ki}+j \Delta^{\!(1)},n_{ki}+j \Delta^{\!(2)}):j\in\mathds{N}_0\bigr\}$.}
    {\For {$k=1$ \KwTo $\lceil l-\frac{a}{b}\rceil-1$}{$e \gets \gcd(k,bl-a)$\;
    \If { $\gcd(b,\frac{bl-a}{e})= 1$}{
        $(M,N) \gets \BezoutCoefficients\bigl(b,\frac{bl-a}{e}\bigr)$\;
        $n\gets \frac{Nk}{e}$\;
        $m\gets M-n$\;
        $\Delta^{\!(1)}\gets \frac{bl-a-bk}{e}$\;
        $\Delta^{\!(2)}\gets \frac{bk}{e}$\;
        \If{ $m=1$\normalfont\textbf{ or } $n=1$ }{
            $(m,n)\gets (m,n)+(\Delta^{\!(1)},\Delta^{\!(2)})$ }
            $\ov{k}\gets \frac{k}{e}$\;
            \eIf {$\ov{k}=1$}{
                \bf{add} $\bigl((k,0),(m,n),(\Delta^{\!(1)},\Delta^{\!(2)})\bigr)$ \bf{to} $\mnFamilies$}
                {\For {$i=0$ \KwTo $\ov{k}-1$}{
                $m_i\gets m+i\Delta^{\!(1)}$\\
                $n_i\gets n+i\Delta^{\!(2)}$\\
                \If{$\gcd(m_i,n_i)=1$}{
                 \bf{add} $\bigl((k,i),(m_i,n_i),(\ov{k}\Delta^{\!(1)},\ov{k}\Delta^{\!(1)})\bigr)$ \bf{to} $\mnFamilies$
                    }
					}
				}
                }
    }}
    {\bf RETURN} $\mnFamilies$
     \caption{GetmnFamilies}
     \label{GetmnFamilies}
\end{algorithm}
\end{center}

\section{Program and graphic display}

A website based on these algorithms is under development, making it possible to visualize the construction of chains starting from points below a given upper bound.

\smallskip

The infrastructure for it consists of three parts:

\begin{enumerate}[itemsep=0.5ex,topsep=0.5ex]

\item A C++ implementation of the described pseudocode, along with additional routines to export the information (corners, edges, open and complete chains) to text files formatted for input into an SQL database.

\item An SQL database instance, implemented in PostgreSQL, which organizes the data generated by the C++ program in order to enable easy access by SQL queries.

\item A website mainly developed in the JavaScript language, using the D3.js library for the graphical interface, along with PHP scripts to query the database.

\end{enumerate}
This structure allows a clear separation of responsibilities: the JavaScript code is only concerned with showing the information, assuming it is already suitably formatted, while the C++ program is only concerned with generating the information. It also allows for fast updates to any part of the infrastructure, since each part only depends on the output generated by the others and not on their implementation.
The website consists of a single widget, which contains the following controls:

\begin{enumerate}[itemsep=0.5ex,topsep=0.5ex]

\item An options bar, near the top and below the title. This includes a button to load all points $(x,y)$ with $v_{11}(x,y) < \text{deg}$, for some specified value of $\text{deg}$, and checkboxes for options.

\item A numbered two-dimensional grid, with the ability to zoom and pan, which displays the current items (a collection of corners and edges). A corner $A$ can be clicked to display an edge $(\mathcal{A},\mathcal{A}')$, and the bottom point $A'$ of an edge can be clicked to display the corners generated by it.

\item A collection of \textit{filters} in a right hand panel. These are checkboxes that can be used to only show specific corners. For example, only corners of Type I and Type II, or only corners leading to admissible complete chains.

\end{enumerate}

\section[Admissible complete chains with $v_{11}(A_0)\le 35$]{Admissible complete chains with $\bm{v_{11}(A_0)\le 35}$}

Applying Algorithm~\ref{Main algorithm} with $M\! =\! 35$ we obtain the admissible complete chains $(\mathcal{C}_0,\dots,\mathcal{C}_j,\mathcal{A}_{j+1})$ with $v_{11}(A_0)\le 35$, where $\mathcal{A}_0$ is the first coordinate of $\mathcal{C}_0$. This procedure yields~$14$ admissible complete chains of length~$1$ and~$2$ admissible complete chains of length~$2$. Applying now Algorithm~\ref{GetmnFamilies} with input the final corner $\mathcal{A}_{j+1}$ of any of these chains we obtain the corresponding $(m,n)$-families $\MN_k(\mathcal{A}_{j+1})$ (see Definition~\ref{mn families}). We obtain a two tables. The first consists of~$17$ families of length~$1$, and the second one, of~$7$ families of length~$2$. We only list the cases satisfying equality~\eqref{ecuacion diofantica}. The other cases (satisfying~\eqref{ecuacion diofantica2}) can be obtained by swapping $m$ with $n$.
\begin{center}
\ra{1.2}
\begin{tabular}[t]{ccccccc}
\toprule
\text{Family} & $\mathcal{A}_0$ &$\mathcal{A}_0'$ & $\mathcal{A}_1$& k & m &n \\
\midrule
              $F_1$ & $(4,12)$ & $(1,0)$ & $(7\wrs 4,3)$ & $1$ & $2j+3$ & $3j+4$ \\
              $F_2$ & $(5,20)$ & $(1,0)$ & $(7\wrs 5,2)$ & $1$ & $j+2$ & $2j+3$ \\
              $F_3$ & $(5,20)$ & $(1,0)$ & $(8\wrs 5,3)$ & $1$ & $4j+3$ & $3j+2$ \\
              $F_4$ & $(5,20)$ & $(1,0)$ & $(8\wrs 5,3)$ & $2$ & $2j+3$ & $12j+16$ \\
              $F_5$ & $(5,20)$ & $(1,0)$ & $(9\wrs 5,4)$ & $1$ & $7j+9$ & $4j+5$ \\
              $F_6$ & $(5,20)$ & $(1,0)$ & $(9\wrs 5,4)$ & $2$ & $3j+4$ & $8j+10$ \\
              $F_7$ & $(6,15)$ & $(1,0)$ & $(7\wrs 3,4)$ & $1$ & $j+2$ & $4j+7$ \\
              $F_8$ & $(6,15)$ & $(1,0)$ & $(8\wrs 3,5)$ & $1$ & $2j+3$ & $5j+7$ \\
              $F_9$ & $(7,21)$ & $(1,0)$ & $(11\wrs 7,2)$ & $1$ & $j+2$ & $2j+3$ \\
              $F_{10}$ & $(7,21)$ & $(1,0)$ & $(13\wrs 7,3)$ & $1$ & $5j+7$ & $3j+4$ \\
              $F_{11}$ & $(7,21)$ & $(1,0)$ & $(13\wrs 7,3)$ & $2$ & $j+2$ & $3j+5$ \\
              $F_{12}$ & $(8,24)$ & $(2,0)$ & $(13\wrs 4,5)$ & $1$ & $2j+3$ & $5j+7$ \\
              $F_{13}$ & $(9,21)$ & $(2,0)$ & $(13\wrs 3,7)$ & $1$ & $j+2$ & $7j+13$ \\
              $F_{14}$ & $(9,24)$ & $(1,0)$ & $(7\wrs 3,4)$ & $1$ & $j+2$ & $4j+7$ \\
              $F_{15}$ & $(9,24)$ & $(1,0)$ & $(8\wrs 3,5)$ & $1$ & $2j+3$ & $5j+7$ \\
              $F_{16}$ & $(9,24)$ & $(1,0)$ & $(10\wrs 3,7)$ & $1$ & $4j+3$ & $7j+5$ \\
              $F_{17}$ & $(9,24)$ & $(1,0)$ & $(11\wrs 3,8)$ & $1$ & $5j+2$ & $8j+3$ \\
\bottomrule
\end{tabular}
\end{center}

\end{spacing}

and

\begin{center}
\ra{1.2}
\begin{tabular}[t]{ccccccccc}
\toprule
\text{Family} & $\mathcal{A}_0$ &$\mathcal{A}_0'$ & $\mathcal{A}_1$& $\mathcal{A}_1'$& $\mathcal{A}_2$&k & m &n \\
\midrule
              $F_{18}$ & $(6,18)$ & $(6,15)$ & $(6,15)$ & $(1,0)$ & $(7\wrs 3,4)$ & $1$ & $j+2$ & $4j+7$ \\
              $F_{19}$ & $(6,18)$ & $(6,15)$ & $(6,15)$ & $(1,0)$ & $(8\wrs 3,5)$ & $1$ & $2j+3$ & $5j+7$ \\
              $F_{20}$ & $(6,24)$ & $(6,15)$ & $(6,15)$ & $(1,0)$ & $(7\wrs 3,4)$ & $1$ & $j+2$ & $4j+7$ \\
              $F_{21}$ & $(6,24)$ & $(6,15)$ & $(6,15)$ & $(1,0)$ & $(8\wrs 3,5)$ & $1$ & $2j+3$ & $5j+7$ \\
              $F_{22}$ & $(8,24)$ & $(2,0)$ & $(14\wrs 4,6)$ & $(5\wrs 4,2)$ & $(5\wrs 4,2)$ & $1$ & $j+2$ & $2j+3$ \\
              $F_{23}$ & $(8,24)$ & $(2,0)$ & $(14\wrs 4,6)$ & $(11\wrs 4,4)$ & $(11\wrs 4,4)$ & $1$ & $j+2$ & $4j+7$ \\
              $F_{24}$ & $(8,24)$ & $(2,0)$ & $(14\wrs 4,6)$ & $(5\wrs 4,0)$ & $(19\wrs 8,3)$ & $1$ & $2j+3$ & $3j+4$ \\
\bottomrule
\end{tabular}
\end{center}

\smallskip

For each one of these chains let $(a\wrs l,b)$ be its final corner and let $e_k = \gcd(k,bl-a)$. In all the cases except $F_4$, we have $k/e_k=1$. In case $F_4$ we have $k/e_k = 2$ and $J_k(8\wrs 5,3) = \{1\}$.

\smallskip

We claim that the families $F_{18}$, $F_{19}$, $F_{20}$ and $F_{21}$ can not be obtained from a standard $(m,n)$-pair $(P,Q)$ as in Theorem~\ref{standard pair generates complete chain}. Note that with the notations used in that theorem for the four families we have
$$
(\rho_0,\sigma_0) = \dir(A_0-A'_0)=(1,0)\qquad\text{and}\qquad (\rho_1,\sigma_1) = \dir(A_1-A'_1)=(3,-1).
$$
Hence, by the second equality in~\eqref{q_k} we have $q_1 = 3$. If there were an $(m,n)$-pair $(P,Q)$ for one the families, then by equality~\eqref{eq1} and Remark~\ref{admis} with $h=0$ and $i=k=1$ there exists $R\in L$ such that $\ell_{10}(P)=R^{3m}$. Let $(a,b) = A_0$ and $(a',b') = A'_0$. Since
$$
\ell_{10}(P) = x^{a'm}y^{b'm}p(y)\qquad\text{where $p(0)\ne 0$ and $\deg(p) = mb-mb'$,}
$$
in the first two cases there exist $\lambda_P,\lambda\in K^{\times}$ such that
$$
\ell_{10}(P)= \lambda_p(x^2y^5(y-\lambda))^{3m},
$$
while in the last two cases there exist $\lambda_P,\lambda,\lambda',\lambda''\in K^{\times}$ such that
$$
\ell_{10}(P)= \lambda_p(x^2y^5(y-\lambda)(y-\lambda')(y-\lambda''))^{3m}\quad\text{and}\quad\text{$\lambda\notin\{\lambda',\lambda''\}$ or $\lambda = \lambda' = \lambda''$}.
$$
Define $\varphi\in \Aut(L)$ by
$$
\varphi(x)\coloneqq x\quad\text{and}\quad \varphi(y)\coloneqq y+\lambda.
$$
By~\cite{GGV1}*{Proposition~3.9} we know that, for all $H\in L$,
\begin{align*}
&\ell_{10}(\varphi(H)) = \varphi(\ell_{10}(H)),\qquad \en_{10}(\varphi(H)) = \en_{10}(H)\notag
\shortintertext{and}
&\ell_{\rho_1,\sigma_1}(\varphi(H)) = \ell_{\rho_1,\sigma_1}(H)\quad\text{for all $(1,0)< (\rho_1,\sigma_1) < (-1,0)$}.
\end{align*}
Using this with $H =P$ and $H = Q$, we obtain that
$$
\frac{v_{11}(\varphi(P))}{v_{11}(\varphi(Q))} = \frac{v_{10}(\varphi(P))}{v_{10}(\varphi(Q))} = \frac{m}{n}\quad\text{and}\quad v_{1,-1}(\en_{10}(\varphi(P))) < 0.
$$
Hence $(\varphi(P),\varphi(Q))$ is an $(m,n)$-pair, since, by~\cite{GGV1}*{Proposition~3.10},
$$
[\varphi(P),\varphi(Q)] = [P,Q]\in K^{\times}.
$$
Moreover
$$
\ell_{10}(\varphi(P)) = \varphi(\ell_{10}(P)) = \lambda_p(x^2(y+\lambda)^5y)^{3m} = \lambda_p x^{6m}y^{3m} (y+\lambda)^{15m}
$$
in the first two cases, and
$$
\ell_{10}(\varphi(P)) = \varphi(\ell_{10}(P)) = \lambda_p x^{6m}y^{3m}(y+\lambda-\lambda')^{3m}(y+\lambda-\lambda'')^{3m} (y+\lambda)^{15m}
$$
in the last two cases. So, in the first two cases
$$
\frac{1}{m}\st_{10}(\varphi(P)) = (6,3),
$$
and the same occurs in the last two cases if $\lambda\notin\{\lambda',\lambda''\}$. Hence, by~\cite{GGV2}*{Remark~3.2} the point $(6,3)$ is a last lower corner. But this is impossible by~\cite{GGV2}*{Remark~3.29}. On the other hand if in the last two cases $\lambda = \lambda' = \lambda''$, then
$$
\frac{1}{m}\st_{10}(\varphi(P)) = (6,9),
$$
and so $(\varphi(P),\varphi(Q))$ is a standard $(m,n)$-pair. Let $(A,A',(\rho,\sigma))$ be the starting triple of $(\varphi(P),\varphi(Q))$. Since
$$
(1,-1)<(\rho,\sigma)\le \Pred_{\varphi(P)}(1,0),
$$
arguing as in the proof of \cite{GGV1}*{Proposition~6.1(9)} we obtain that
$$
v_{11}(A)\le v_{11}(6,9) = 15.
$$
But this is impossible by \cite{GGV1}*{Proposition~6.5}.

\begin{remark}
The possible counterexample in $F_{13}$ with $j=1$ was analyzed extensively by Orevkov in~\cite{O} (see~\cite{O}*{Lemma~4.1(a)}).
\end{remark}

\section[Possible counterexamples with $\max(\deg(P),\deg(Q))\le 150$]{Possible counterexamples with $\bm{\max(\deg(P),\deg(Q))\le 150}$}

In~\cite{M} there are listed four cases (which correspond to six cases in our terminology) of possible counterexamples with $\max(\deg(P),\deg(Q))\le 100$. They are discarded by hand. Here we describe the shape of the $34$ possible counterexamples with $\max(\deg(P),\deg(Q))\le 150$.  We only list the cases satisfying equality~\eqref{ecuacion diofantica}. The other cases (satisfying~\eqref{ecuacion diofantica2}) can be obtained by swapping $m$ with $n$. Thirteen of them correspond to a choice of $(m,n)$ in some of the families listed in the previous section, as can be seen in the following table, where the red pairs correspond to possible counterexamples with $\max(\deg(P),\deg(Q))\le 100$.

\begin{center}
\ra{1.2}
\begin{tabular}[t]{clc}
\toprule
\text{Family} & $( m,n)$& $\max\{\deg(P),\deg(Q)\}$ \\
\midrule
              $F_{1}$&{\color{red}(3,4)}&64 \\
              $F_{1}$&(5,7)&112 \\
              $F_{2}$&{\color{red}(2,3)}&75 \\
              $F_{2}$& (3,5)&125 \\
              $F_{3}$&{\color{red}(3,2)}&75 \\
              $F_{7}$&(2,7) & 147\\
              $F_{8}$&(3,7)&147 \\
              $F_{9}$&{\color{red}(2,3)} & 84\\
              $F_{9}$& (3,5) & 140\\
              $F_{11}$&(2,5) & 140 \\
              $F_{17}$&{\color{red}(2,3)}&99 \\
              $F_{22}$& {\color{red}(2,3)*}& 96 \\
              $F_{24}$&(3,4) &128\\
\bottomrule
\end{tabular}
\end{center}
Five of them correspond to the six cases found by Moh, one of the cases of Moh was discarded by the algorithm because it featured $(A_0,A_0')=((7,21),(2,1))$, and $(2,1)\notin \PLLC$. The sixth red case, marked with a star, corresponds to $F_{22}$. This case was probably discarded as a possible counterexample by Heitmann (with no mention to it) by symmetry reasons. This case corresponds to the first case listed in~\cite{LCW}*{pag. 426} with $\delta_3=1/4$, $\delta_2=9/16$ and $\delta_1=7/12$. In Proposition~\ref{caso antisimetrico} we show that we can discard it.

There are $9$ other possible pairs with a complete chain of length~$1$, which we list in the following table:
\begin{center}
\ra{1.2}
\begin{tabular}[t]{cclc}
\toprule
$A_0$ & $A_1$& $(m,n)$ & $\max\{\deg(P),\deg(Q)\}$ \\
\midrule
             (7,35)& (19/7,5)& (2,3)& 126\\
              (7,42)&(13/7,6) &  (3,2)& 147\\
              (7,42)&(13/7,6) &   (2,3)& 147\\
              (8,28)&(7/4,3) & 	 (3,4) & 144\\
              (8,28)&(11/4,7) & 	 (3,2)& 108\\
              (9,36)&(17/9,4) & 	 (3,2)& 135\\
              (9,36)&(17/9,4) & 	 (2,3)& 135\\
              (11,33)&	(19/4,8) & 	 (2,3)&132\\
              (12,33)&(11/3,8) & 	 (2,3)&135\\
\bottomrule
\end{tabular}
\end{center}

There are also $11$ other possible pairs with a complete chain of length~$2$, which we list in the following table:
\begin{center}
\ra{1.2}
\begin{tabular}[t]{ccclc}
\toprule
$A_0$ & $A_1$&$A_2$& $(m,n)$ & $\max\{\deg(P),\deg(Q)\}$ \\
\midrule
              (8,32)&(8,28) & (11/4,7) & 	 (3,2)&120 \\
               (8,40)&(8,28) &(11/4,7) & 	 (3,2)&144 \\
              (9,27) & (9,24) & (11/3,8)&	 (2,3)&108\\
              (9,36) & (9,24) & (11/3,8) &   (2,3)&135 \\
              (10,40) & (16/5,6) & (23/10,3) & 	 (3,2)& 150\\
               (10,40)& (18/5,8) &(8/5,3) &	 (3,2)&150\\
             (12,30)&(16/3,10) & (11/6,3) &  	 (3,2)&126 \\
             (12,36) & (12,33) & (11/3,8) &	 (2,3)&144\\
             (12,36) & (9,24) & (11/3,8) &	 (2,3)&144\\
(12,36)&(21/4,9) & (19/4,8) & 	 (2,3)&144\\
 (12,36)&(21/4,9) &(12/4,5) & 	 (2,3)&144\\
\bottomrule
\end{tabular}
\end{center}

Finally there is another possible pair with a complete chain of length~$3$:
\begin{center}
\begin{tabular}[t]{cccccc}
\toprule
 $A_0$ & $A_1$&$A_2$& $A_3$ & $(m,n)$ & $\max\{\deg(P),\deg(Q)\}$ \\
\midrule
              (12,36)&(12,30) & (16/3,10) & (11/6,3)&	 (3,2)&144 \\
\bottomrule
\end{tabular}
\end{center}

\begin{proposition}\label{caso antisimetrico} The example corresponding to $F_{22}$ with $(m,n)=(2,3)$ can not be obtained from a standard $(m,n)$-pair $(P,Q)$ as in Theorem~\ref{standard pair generates complete chain}.
\end{proposition}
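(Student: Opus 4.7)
Suppose for contradiction that such a standard $(2,3)$-pair $(P,Q)$ exists and produces the chain $F_{22}$. Applying Theorem~\ref{standard pair generates complete chain} supplies data $((P_i,Q_i),(A_i,A_i'),(\rho_i,\sigma_i),l_i)$ with $(\rho_0,\sigma_0)=(4,-1)$, $(\rho_1,\sigma_1)=(16,-9)$, $l_0=1$, $l_1=l_2=4$. The divisibility analysis of Remark~\ref{admis} combined with Remark~\ref{rem 2.21} yields $q_1=2\mid d_0^{(1)}\mid D_0^{(1)}=2$, so $d_0^{(1)}=2$ and $\ell_{\rho_0,\sigma_0}(P)=R_{00}^{4}$ for some $(4,-1)$-homogeneous $R_{00}\in L$ of value $4$. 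Writing $R_{00}=c'\,x\,r(xy^4)$ with $r$ a polynomial of degree $3$, and matching $A_1=(14/4,6)$ via Theorem~\ref{standard pair generates complete chain}(8), Proposition~\ref{multiplicidad}(4) forces the root $\lambda$ of $\mathfrak{p}_0$ to have multiplicity $m_\lambda=12$, so $r$ has a single triple root. Hence $\ell_{\rho_0,\sigma_0}(P)=c\,x^4\,(xy^4-\mu)^{12}$ for some $c,\mu\in K^\times$. Since $v_{\rho_0,\sigma_0}(P)+v_{\rho_0,\sigma_0}(Q)-(\rho_0+\sigma_0)=37>0=v_{\rho_0,\sigma_0}([P,Q])$, the bracket $[\ell_{\rho_0,\sigma_0}(P),\ell_{\rho_0,\sigma_0}(Q)]$ vanishes; writing $\ell_{\rho_0,\sigma_0}(Q)=x^6\,\tilde q(xy^4)$ and computing the Poisson bracket yields the linear ODE $(u-\mu)\,\tilde q'(u)=18\,\tilde q(u)$, whose only polynomial solution of the required degree is $\tilde q(u)=C(u-\mu)^{18}$, so $\ell_{\rho_0,\sigma_0}(Q)=C\,x^6\,(xy^4-\mu)^{18}$.

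Next I pass to $L^{(4)}$. Applying the automorphism $\varphi(y)=y+\lambda x^{-1/4}$ of Theorem~\ref{standard pair generates complete chain}(8), with $\lambda^4=\mu$, gives $\ell_{\rho_0,\sigma_0}(P_1)=c\,x^4\,((z+\lambda)^4-\mu)^{12}$ and $\ell_{\rho_0,\sigma_0}(Q_1)=C\,x^6\,((z+\lambda)^4-\mu)^{18}$, where $z=x^{1/4}y$; in $L^{(4)}$ the factor $(z+\lambda)^4-\mu$ splits as $z(z+2\lambda)(z+(1-i)\lambda)(z+(1+i)\lambda)$. Since $\mathcal{A}_2=\mathcal{A}_1'$, the corner $(A_1,(\rho_1,\sigma_1))$ is of type~II.a), so $[\ell_{\rho_1,\sigma_1}(P_1),\ell_{\rho_1,\sigma_1}(Q_1)]=0$. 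Combining \cite{GGV1}*{Proposition~2.11} with the enumeration of $L^{(4)}$-lattice points on the line $16a-9b=4$ between $\st_{\rho_1,\sigma_1}$ and $\en_{\rho_1,\sigma_1}$ (three available for $P_1$, four for $Q_1$), and noting that $v_{\rho_1,\sigma_1}(H)$ must divide $\gcd(4,6)=2$, forces $v_{\rho_1,\sigma_1}(H)=2$ and the shape $H=h_0\,x^{5/4}y^2+h_1\,x^{7/2}y^6$, with $\ell_{\rho_1,\sigma_1}(P_1)=c_1H^2$ and $\ell_{\rho_1,\sigma_1}(Q_1)=c_2H^3$ (the alternative two-term choices for $H$ push $H^2$ or $H^3$ outside the support segment). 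Matching the top-monomial coefficients with the explicit forms from the previous step gives $c_1h_1^2=2^{24}c\,\lambda^{36}$ and $c_2h_1^3=2^{36}C\,\lambda^{54}$, consistent precisely when $c_2c^{3/2}=c_1^{3/2}C$.

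Finally I turn to the type~I.b) corner $(A_2,(\rho_2,\sigma_2))=((5/4,2),(12,-7))$, where Proposition~\ref{extremosfinales1} gives $\st_{\rho_2,\sigma_2}(Q_1)=(1/4,0)$ and $\st_{\rho_2,\sigma_2}(P_1)=(3/4,1)$. The bracket $[\ell_{\rho_2,\sigma_2}(P_1),\ell_{\rho_2,\sigma_2}(Q_1)]$ must equal $[P,Q]\in K^\times$. Writing $\ell_{\rho_2,\sigma_2}(P_1)=p_0x^{3/4}y+p_1x^{5/2}y^4$ and $\ell_{\rho_2,\sigma_2}(Q_1)=q_0x^{1/4}+q_1x^2y^3+q_2x^{15/4}y^6$ (the support being exactly these two and three lattice points respectively), matching $p_1=c_1h_0^2$ and $q_2=c_2h_0^3$ with the previous step, and computing the bracket explicitly, I obtain the vanishing conditions $p_0q_1=4q_0p_1$ and $3p_0q_2=2p_1q_1$ (so that the bracket is a pure constant $-p_0q_0/4$), together with the top identifications, a system coupling $(h_0,h_1,c_1,c_2,p_0,q_0,\lambda,c,C)$. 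The main obstacle is extracting the contradiction from this system: every corner is internally consistent, so the obstruction emerges only from the rigid coupling produced by the simultaneous equations, reflecting the ``antisymmetric'' structure of $F_{22}$ (that is, $\mathcal{A}_2=\mathcal{A}_1'$) which forced $\ell_{\rho_1,\sigma_1}(P_1)$ and $\ell_{\rho_1,\sigma_1}(Q_1)$ to be pure powers of the same two-term $H$. The expected resolution is that substituting the forced identifications into the two bracket-vanishing relations and using $c_2c^{3/2}=c_1^{3/2}C$ produces a polynomial identity in $\lambda$ (degree determined by $\lambda^4=\mu$) whose only solution requires $h_0h_1=0$, contradicting $h_0,h_1\in K^\times$ and thereby discarding $F_{22}$ with $(m,n)=(2,3)$.
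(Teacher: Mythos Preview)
Your argument has a genuine gap at the crucial final step. You set up a system of equations coupling the coefficients at the three directions $(\rho_0,\sigma_0)=(4,-1)$, $(\rho_1,\sigma_1)=(16,-9)$, $(\rho_2,\sigma_2)=(12,-7)$, and then assert that ``the expected resolution'' is a polynomial identity forcing $h_0h_1=0$. But you never derive this identity, and in fact the system you have written down is consistent. From $3p_0q_2=2p_1q_1$ together with $p_1=c_1h_0^2$ and $q_2=c_2h_0^3$ you get $q_1=\tfrac{3c_2h_0}{2c_1}p_0$; substituting into $p_0q_1=4p_1q_0$ then determines $q_0=\tfrac{3c_2}{8c_1^2h_0}p_0^2$. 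Together with the Jacobian constant $-p_0q_0/4\in K^{\times}$ this merely fixes $p_0$ up to a cube root. The top-corner relations $c_1h_1^2=2^{24}c\lambda^{36}$ and $c_2h_1^3=2^{36}C\lambda^{54}$ likewise impose only the compatibility $c_2^2c^3=c_1^3C^2$ already forced by $\ell_{\rho_0,\sigma_0}(P)^3\sim\ell_{\rho_0,\sigma_0}(Q)^2$. There are strictly more unknowns than equations, and no contradiction emerges. This is not an accident: $F_{22}$ passed every admissibility test of Algorithm~\ref{Main algorithm}, so the leading-term data along the given chain are internally consistent; any obstruction must come from elsewhere.

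The paper's proof uses a completely different idea. Rather than analysing the given chain, it exploits the factorisation at the $(\rho_1,\sigma_1)=(16,-9)$ level: one has $\ell_{16,-9}(P_1)=\lambda_p\,x^{5m/4}y^{2m}(z^4-\lambda')^m$ with $z=x^{9/16}y$, and the polynomial $z^4-\lambda'$ has \emph{four} distinct roots, each of multiplicity $m$. Applying the automorphism $\varphi(y)=y+\lambda x^{-9/16}$ of $L^{(16)}$ for one such root $\lambda$ produces a new $(m,n)$-pair $(\varphi(P_1),\varphi(Q_1))$ in $L^{(16)}$ with $\tfrac{1}{m}\st_{16,-9}(\varphi(P_1))=(11/16,1)$. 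By \cite{GGV1}*{Propositions~5.18 and~5.19} and Proposition~\ref{modif final} this yields a regular corner of type~I.b) at $(11/16,1)$, and Proposition~\ref{extremosfinales1} with $a=11$, $b=1$, $l=16$ forces $(m+n)-5m/k=1$ or $(m+n)-5n/k=1$ for some $1\le k\le 4$; with $\{m,n\}=\{2,3\}$ this reads $5(k-m)=k$ or $5(k-n)=k$, impossible since $5\nmid k$. The key move you are missing is to pass to $L^{(16)}$ and use a root of $\mathfrak p_1$ \emph{other} than the ones dictated by the original chain, thereby manufacturing a new final corner that fails the Diophantine test.
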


\begin{proof} With the notations used in Theorem~\ref{standard pair generates complete chain}, we have
$$
\mathcal{A}_1=(14\wrs 4,6),\qquad \mathcal{A}_1'=\mathcal{A}_2=(5\wrs 4,2) \qquad\text{and}\qquad (\rho_1,\sigma_1) = \dir(A_1-A'_1)=(16,-9).
$$
Consequently,
$$
\ell_{16,-9}(P_1) = x^{\frac{5m}{4}}y^{2m} p(z)\quad\text{with $z\coloneqq x^{\frac{9}{16}}y$, $p\in K[z]$ and $p(0)\ne 0$.}
$$
Combining this with equality~\eqref{eq1} and the fact that $\gap(16,4) = 4$ we obtain that
$$
\ell_{16,-9}(P_1) = \lambda_p x^{\frac{5m}{4}}y^{2m} (z^4-\lambda')^m\qquad\text{where $\lambda',\lambda_p\in K^{\times}$.}
$$
Hence
$$
\ell_{16,-9}(P_1) = \lambda_p x^{\frac{5m}{4}}y^{2m} (z^4-\lambda^4)^m = \lambda_p x^{\frac{5m}{4}}y^{2m} (z-\lambda)^m(z^3+z^2\lambda+z\lambda^2+\lambda^3)^m
$$
where $\lambda\in K^{\times}$ is such that $\lambda^4=\lambda'$. Thus the multiplicity $m_{\lambda}$ of $\lambda$ as a root of $p(z)$ equals $m$. Define $\varphi\in \Aut(L^{(16)})$ by $\varphi(x)\coloneqq x$ and $\varphi(y)\coloneqq  y+\lambda x^{-9/16}$. By~\cite{GGV1}*{Proposition~3.9} we know that,
\begin{align*}
&\ell_{16,-9}(\varphi(H)) = \varphi(\ell_{16,-9}(H)),\qquad \en_{16,-9}(\varphi(H)) = \en_{16,-9}(H)\notag
\shortintertext{and}
&\ell_{\rho_1,\sigma_1}(\varphi(H)) = \ell_{\rho_1,\sigma_1}(H)\quad\text{for all $(16,-9)< (\rho_1,\sigma_1) < (-16,9)$},
\end{align*}
for all $H\in L^{(16)}$. Using this with $H =P_1$ and $H = Q_1$, we obtain that
$$
\frac{v_{11}(\varphi(P_1))}{v_{11}(\varphi(Q_1))} = \frac{v_{10}(\varphi(P_1))}{v_{10}(\varphi(Q_1))} = \frac{m}{n}\quad\text{and}\quad v_{1,-1}(\en_{16,-9}(\varphi(P_1))) < 0.
$$
Hence $(\varphi(P_1),\varphi(Q_1))$ is an $(m,n)$-pair, since $[\varphi(P_1),\varphi(Q_1)] = [P_1,Q_1]\in K^{\times}$, by~\cite{GGV1}*{Proposition~3.10}. Moreover
\begin{align*}
\ell_{16,-9}(\varphi(P_1)) & = \varphi(\ell_{16,-9}(P_1)) \\
& = \lambda_p x^{\frac{5m}{4}} (y+\lambda x^{\frac{-9}{16}})^{2m}((z+\lambda)^4-\lambda^4))^m\\
%
%
%
&= \lambda_p x^{\frac{11m}{16}}y^{m}(z+\lambda)^{2m} (z^3 + 4z^2\lambda + 6z\lambda^2 + 4\lambda^3)^m,
\end{align*}
and so $\bigl(\frac{11}{16},1\bigr) = \frac{1}{m}\st_{16,-9}(\varphi(P_1))$. Now note that the inequality~(5.9) in \cite{GGV1}*{Proposition~5.18} is satisfied for $a=20$, $b=6$, $l=16$, $\rho=16$ and $\sigma=-9$. Consequently, by that proposition, the $(m,n)$-pair $(\varphi(P_1),\varphi(Q_1))$ has a regular corner at $(11/16,1)$. Since $\gcd(11,1) = 1$, by \cite{GGV1}*{Proposition~5.19} there exists a (possibly different) $(m,n)$-pair $(P',Q')$ in $L^{(16)}$ such that $(11/16,1)$ is the first entry of a regular corner of type~I of $(P',Q')$. By Proposition~3.1 we can assume that $(11/16,1)$ is the first entry of a regular corner of type~I.b) of $(P',Q')$. Then $a=11$, $b=1$, $l=16$, $k\in\{1,2,3,4\}$, $e_k=1$ and $\{m,n\}=\{2,3\}$ in the setting of Proposition~\ref{extremosfinales1}. Hence
$$
1=(m+n)b-\frac{me_k}{k} \frac{bl-a}{e_k}=5-\frac{m}{k}5=5\frac{k-m}{k}
$$
or
$$
1=(m+n)b-\frac{ne_k}{k} \frac{bl-a}{e_k}=5-\frac{n}{k}5=5\frac{k-n}{k}.
$$
But both equalities are evidently false for $n,m\in\{2,3\}$ and $k\in\{1,2,3,4\}$, since $5\nmid k$.
\end{proof}

\begin{bibdiv}
\begin{biblist}

\bib{GGV1}{article}{
   author={Valqui, Christian},
   author={Guccione, Jorge A.},
   author={Guccione, Juan J.},
   title={On the shape of possible counterexamples to the Jacobian
   Conjecture},
   journal={J. Algebra},
   volume={471},
   date={2017},
   pages={13--74},
   issn={0021-8693},
}

\bib{GGV2}{article}{
author={Guccione, Jorge Alberto},
author={Guccione, Juan Jos\'e},
author={Valqui, Christian},
   title={The Two-Dimensional Jacobian Conjecture and the
Lower Side of the Newton Polygon},
   eprint={arXiv:1605.09430},
   }

\bib{H}{article}{
   author={Heitmann, R.},
   title={On the Jacobian conjecture},
   journal={Journal of Pure and Applied Algebra},
   volume={64},
   date={1990},
   pages={35--72},
   issn={0022-4049},
   review={\MR{1055020 (91c :14018)}},
}

\bib{K}{article}{
   author={Keller, Ott-Heinrich},
   title={Ganze Cremona-Transformationen},
   language={German},
   journal={Monatsh. Math. Phys.},
   volume={47},
   date={1939},
   number={1},
   pages={299--306},
   issn={0026-9255},
   review={\MR{1550818}},
   doi={10.1007/BF01695502},
}

\bib{LCW}{article}{
   author={Wang, Lih-Chung},
   title={On the Jacobian conjecture},
   journal={Taiwanese J. Math.},
   volume={9},
   date={2005},
   number={3},
   pages={421--431},
   issn={1027-5487},
   review={\MR{2162887}},
}

\bib{M}{article}{
   author={Moh, T. T.},
   title={On the Jacobian conjecture and the configurations of roots},
   journal={J. Reine Angew. Math.},
   volume={340},
   date={1983},
   pages={140--212},
   issn={0075-4102},
   review={\MR{691964 (84m:14018)}},
}

\bib{O}{article}{
   author={Orevkov, S. Yu.},
   title={Counterexamples to the ``Jacobian conjecture at infinity''},
   language={Russian, with Russian summary},
   journal={Tr. Mat. Inst. Steklova},
   volume={235},
   date={2001},
   number={Anal. i Geom. Vopr. Kompleks. Analiza},
   pages={181--210},
   translation={
      journal={Proc. Steklov Inst. Math.},
      date={2001},
      number={4(235)},
      pages={173--201},
      issn={0081-5438},
   },
   review={\MR{1886583}},
}

\bib{vdE}{book}{
   author={van den Essen, Arno},
   title={Polynomial automorphisms and the Jacobian conjecture},
   series={Progress in Mathematics},
   volume={190},
   publisher={Birkh\"auser Verlag},
   place={Basel},
   date={2000},
   pages={xviii+329},
   isbn={3-7643-6350-9},
   review={\MR{1790619 (2001j:14082)}},
   doi={10.1007/978-3-0348-8440-2},
}

\end{biblist}
\end{bibdiv}


\end{document}